\begin{document}

\newtheorem{thm}{Theorem}[section]
\newtheorem{cor}[thm]{Corollary}
\newtheorem{lmm}[thm]{Lemma}
\newtheorem{conj}[thm]{Conjecture}
\newtheorem{pro}[thm]{Proposition}
\theoremstyle{definition}\newtheorem{df}[thm]{Definition}
\theoremstyle{remark}\newtheorem{rem}[thm]{Remark}

\title{{\bf The Generalized Zagreb Index for Non-Plane and Plane Recursive Trees}}
\author{Qunqiang Feng \\ Department of Statistics and Finance \\ University of Science and Technology of China \\ Hefei 230026 \\ P. R. China
\and
Michael Fuchs \\ Department of Mathematical Sciences \\ National Chengchi University \\ Taipei 116 \\ Taiwan
\and
Tsan-Cheng Yu\thanks{Corresponding Author, Email: tsancheng@mail.fju.edu.tw}\\
Department of Mathematics\\
Fu Jen Catholic University\\
New Taipei City, 242062\\
Taiwan}
\date{\today}
\maketitle
\begin{abstract}
The Zagreb index, which is defined as the sum of squares of degrees of the nodes of a tree, was studied in previous works by martingale techniques for random non-plane recursive trees and classes of random trees which are close to random plane recursive trees. These techniques are not easily amended to the generalized Zagreb index, which is defined similar but with squares replaced by higher powers. In this paper, we use the moment transfer approach to (i) obtain the first-order asymptotics of moments and to (ii) prove limit laws for the (suitable normalized) generalized Zagreb index for random non-plane and plane recursive trees; for the former, we show that for all higher powers the limit law is normal, for the latter, we show for cubes and fourth powers that its a non-normal law. 
\end{abstract}

\section{Introduction}
The Zagreb indices are among the most extensively studied topological indices in chemical graph theory. 
Initially introduced by Gutman and Trinajsti\'c in the 1970s \cite{GuTr}, 
the first Zagreb index has been widely utilized to predict the physicochemical properties of chemical compounds \cite{ToCo}. 
It has also been extensively applied in quantitative structure-property relationship (QSPR) and 
quantitative structure-activity relationship (QSAR) studies (see, e.g., \cite{DeBa}). 
Recently, researchers have generalized this index to accommodate more complex molecular structures, 
resulting in the development of the generalized Zagreb index (see \cite{LiZh,ZhZh,BeSa} and references therein). 
These generalizations provide enhanced flexibility in modeling molecular structures and their associated properties \cite{FuGu}.

The generalized Zagreb index is formally defined as follows. 
Let $G=(V,E)$ be a graph with vertex set $V$ and edge set $E$.	
For any positive integer $k$, the $k$-th order generalized Zagreb index of $G$ is defined as
\[
 Z_{G}^{(k)}=\sum_{v\in V} D_v^{k}=\sum_{uv\in E}\big(D_u^{k-1}+D_v^{k-1}\big),
\]
where $D_v$ denotes the degree of vertex $v$ in $G$.
Specially, $Z_{G}^{(k)}$ corresponds to the first Zagreb index when $k=2$, 
and it is referred to as the forgotten topological index when $k=3$ \cite{FuGu}. 
The mathematical properties of the generalized Zagreb index have been extensively studied.
For example, extremal values for this index in trees and unicyclic graphs are determined in \cite{LiZh2004,HuDe},
and upper or lower bounds of $Z_{G}^{(k)}$ are derived for some specific classes of graphs, 
such as planar graphs \cite{HaJe}, bipartite graphs \cite{XuTa}, and line graphs \cite{ChBo}.
For a comprehensive review of the first Zagreb index and its applications in chemistry, we refer to \cite{GuDa}, 
and for relations between the generalized Zagreb index and other topological indices, see \cite{AlGu}.

Random non-plane recursive trees (also just called recursive trees) and plane recursive trees (also called plane-oriented recursive trees or PORTs) are two fundamental structures in the study of random graphs and combinatorial probability (see, e.g., \cite{Dr}).
Using martingale techniques, the limit laws for the first Zagreb index of non-plane recursive trees and 
Barab\'asi-Albert trees (a slight variant of plane recursive trees) are established in \cite{FeHu,FeHu2013};
see also \cite{ZhP} for explicit expressions of the first two moments and the limit law of the first Zagreb index for another (slight) variant of plane recursive trees. 
Furthermore, with an application of Stein's method,
the asymptotic normality of the first Zagreb index for classical Erd\H{o}s-R\'enyi (ER) random graphs is established 
as the graph size approaches infinity \cite{FHS}. 
For the generalized Zagreb index of ER random graphs,  
its expectation is obtained in \cite{DoHo}, and several limits laws are further demonstrated in \cite{FeRe}. 
Motivated by these studies, our work focuses on the asymptotic behavior of this index for non-plane and plane recursive trees.  
While extending prior work \cite{FeHu,FeHu2013,ZhP} might seem natural, 
the martingale techniques and Stein's method
are not applicable for general $k$. 
Instead, we employ the moment transfer approach in this work.

The rest of this paper is organized as follows. In the next section, we formally define random non-plane and plane recursive trees, obtain distributional recurrences for the general Zagreb index, and recall the moment transfer approach. In Section~\ref{nprt}, we apply this method to derive the first-order asymptotics of moments and a central limit theorem for non-plane recursive trees (including the case $k=2$ which was already obtained in earlier works). In Section~\ref{prt}, we show corresponding results for plane recursive trees (for $k\geq 3$ as the case $k=2$ requires a different treatment). We end the paper with a conclusion in Section~\ref{con}.

\section{Recurrences and the Moment-Transfer Approach}

\paragraph{Random non-plane and plane recursive trees.} We first give precise definitions of the two random tree models considered in this paper.

First, a {\it random non-plane recursive tree} of size $n$ is recursively built as follows. Start with a root node labeled by $1$. In the $i$-th step, a node labeled by $i$ joins to already built tree by becoming the child of one of $i-1$ existing nodes, where the parent node is picked uniformly at random from these $i-1$ nodes. We stop when exactly $n$ nodes have joined.

Note that the children of every non-leaf node in a random non-plane recursive tree are not ordered. On the other hand, in a {\it random plane recursive tree}, in the $i$-th step, we assume that a node with $k$ children has $k+1$ free places (one in front of the first child, the next between the first and the second child, etc.). The $i$-th node then chooses uniformly at random from one of those free places (among all the free places) and is attached there. Thus, in contrast to random non-plane recursive trees, the children of every non-leaf node in a random plane recursive tree have a left-to-right order.

The above definitions easily imply that there are $(n-1)!$ different non-plane recursive trees of size $n$ and
\[
(2n-3)!!=\frac{(2n-2)!}{2^{n-1}(n-1)!}=\frac{n!C_{n}}{2^{n-1}}
\]
different plane recursive trees of size $n$, where $C_n$ denotes the (shifted) Catalan numbers. Moreover, from this, we can re-derive the distribution of the size $I_n$ of the left-most subtree of the root of a random non-plane or plane recursive tree (where for non-plane recursive trees, we order the subtrees of the root according to increasing labels of their roots); see \cite{Dr} and \cite{Hw}.

\begin{lmm}\label{dis-In}
\begin{itemize}
\item[(i)] For non-plane recursive trees, $I_n$ has a uniform distribution on the set $\{1,\ldots,n-1\}$.
\item[(ii)] For plane recursive trees, 
\[
{\mathbb P}(I_n=j)=\frac{2(n-j)C_jC_{n-j}}{nC_n},\qquad (1\leq j\leq n-1).
\]
\end{itemize}
\end{lmm}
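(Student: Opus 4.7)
My plan is to prove both parts by a direct bijective enumeration in which each tree of size $n$ is decomposed into its left-most subtree $T_L$ of size $j$ and the complementary tree $T_R$ of size $n-j$ (which contains the root $1$). Using the total counts $(n-1)!$ and $(2n-3)!!$ recalled in the excerpt, it will then suffice to count in how many ways a tree with $I_n=j$ can be assembled.

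For part (i), the key observation is that node $2$ must attach to node $1$ and, under the convention that root-subtrees are ordered by increasing root labels, is the left-most child of the root; hence $T_L$ is precisely the subtree rooted at node $2$. I would enumerate triples consisting of (a) a set $S\subseteq\{3,\ldots,n\}$ of size $j-1$ specifying the remaining labels in $T_L$, giving $\binom{n-2}{j-1}$ choices; (b) a non-plane recursive tree on $\{2\}\cup S$ with root $2$, giving $(j-1)!$ choices; and (c) a non-plane recursive tree on $\{1\}\cup(\{3,\ldots,n\}\setminus S)$ with root $1$, giving $(n-j-1)!$ choices. The two factorials in the denominator of the binomial coefficient cancel, the product collapses to $(n-2)!$ independently of $j$, and dividing by $(n-1)!$ yields uniformity on $\{1,\ldots,n-1\}$.

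For part (ii), I would run the analogous decomposition in the plane setting. The left-most subtree $T_L$, viewed on its own vertex set $A\subseteq\{2,\ldots,n\}$ with $|A|=j$, is itself a plane recursive tree whose root is forced to be $\min A$, since the descendants of the current left-most child of $1$ all joined after that child. The count becomes $\binom{n-1}{j}$ choices for $A$, $(2j-3)!!$ choices for $T_L$ on the label set $A$, and $(2(n-j)-3)!!$ choices for $T_R$ on $\{1,\ldots,n\}\setminus A$; conversely, given $(A,T_L,T_R)$ the unique reassembly is obtained by attaching the root of $T_L$ as the new left-most child of node $1$ in $T_R$. The main point requiring care is verifying bijectivity: the gluing produces a valid plane recursive tree because the only new parent-child pair is $(1,\min A)$ with $1<\min A$, so the increasing-label property is preserved, while the canonical extraction of the left-most subtree from $T$ inverts the construction. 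Dividing by $(2n-3)!!$ and substituting the identity $(2k-3)!!=k!\,C_k/2^{k-1}$ then yields the claimed formula after routine algebra; the arithmetic collapse in (i) and the Catalan simplification in (ii) are both mechanical, so the conceptual content lies entirely in identifying the correct decomposition.
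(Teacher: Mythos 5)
Your proof is correct and follows essentially the same route as the paper: decompose the tree into the left-most root subtree and the remainder, count the pairs via a label-set choice times the two subtree counts, and divide by the total number of trees. The only difference is that you spell out the bijection (in particular, that the left-most subtree is rooted at node $2$ in the non-plane case and at $\min A$ in the plane case) which the paper leaves implicit.
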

\begin{proof}
Both results follow from (simple) counting arguments using the above numbers of non-plane and plane recursive trees and decomposing the tree into the left-most subtree of the root and the remaining tree (both of which are again non-plane and plane recursive trees, respectively). First, for part (i):
\[
{\mathbb P}(I_n=j)=\binom{n-2}{j-1}\frac{(j-1)!(n-j-1)!}{(n-1)!}=\frac{1}{n-1},
\]
where the binomial coefficient takes care of the relabeling of the two trees. Likewise for part (ii):
\[
{\mathbb P}(I_n=j)=\binom{n-1}{j}\frac{(2j-3)!!(2n-2j-3)!!}{(2n-3)!!}
\]
which simplifies to the claimed form.
\end{proof}

\paragraph{Generalized Zagreb index and root degree.} The decomposition of random non-plane and plane recursive trees of size $n$ used in the proof of Lemma~\ref{dis-In} can also be used to recursively compute the generalized Zagreb index, which we denote by $Z_n^{(k)}$ throughout this work.

\begin{pro}
The generalized Zagreb index of a random (non-plane or plane) recursive tree of size $n$ satisfies, for $n\geq 2$,
\begin{equation}\label{rec-Zn}
Z_{n}^{(k)}\stackrel{d}{=}Z_{I_n}^{(k)}+\tilde{Z}_{n-I_n}^{(k)}-R_{I_n}^k+(R_{I_n}+1)^k-\tilde{R}_{n-I_n}^k+(\tilde{R}_{n-I_n}+1)^k
\end{equation}
with initial condition $Z_1^{(k)}=0$. Here, $R_n$ denotes the (random) degree of the root, $\tilde{Z}_n$ and $\tilde{R}_n$ are independent copies of $Z_n$ and $R_n$, and $(Z_n,R_n)$ and $(I_n)$ are independent. 
\end{pro}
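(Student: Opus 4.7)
The plan is to prove the recurrence by the standard root decomposition that already underlies Lemma~\ref{dis-In}, then to track how each vertex's degree changes when we split the tree along the edge from the root~$1$ to the root of its left-most subtree.

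First I would condition on $I_n = j$ and write the tree $T$ of size $n$ as the disjoint union of the left-most subtree $T_1$ (of size $j$, rooted at the child of $1$ with smallest label in the non-plane case, or at the left-most child in the plane case) and the complementary tree $T_2$ (of size $n-j$), obtained from $T$ by deleting the vertices of $T_1$ together with the edge joining the root of $T_1$ to $1$. The counting arguments in the proof of Lemma~\ref{dis-In} show that after relabeling, the pair $(T_1,T_2)$ is uniformly distributed over the product of random non-plane (resp.\ plane) recursive trees of sizes $j$ and $n-j$; hence, conditionally on $I_n$, $T_1$ and $T_2$ are independent with the correct laws. This allows me to realize the right-hand side of~\eqref{rec-Zn} on the same probability space by taking the $(Z_{I_n}^{(k)},R_{I_n})$ coming from $T_1$ and the $(\tilde Z_{n-I_n}^{(k)},\tilde R_{n-I_n})$ coming from $T_2$, with $(I_n)$, $(Z_{I_n},R_{I_n})$, and $(\tilde Z_{n-I_n},\tilde R_{n-I_n})$ jointly independent as claimed.

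The key computation is the degree bookkeeping. Every vertex $v\neq\text{root}(T_1),1$ has the same set of neighbors in $T$ as in the subtree containing it, so $D_v(T)=D_v(T_i)$ for $v\in V(T_i)$. The only two exceptions are the root of $T_1$, which gains one neighbor in $T$ (namely $1$), and the root $1$ itself, which gains one neighbor in $T$ (namely the root of $T_1$); their degrees in $T$ are therefore $R_{I_n}+1$ and $\tilde R_{n-I_n}+1$, respectively. Splitting the sum defining $Z_n^{(k)}$ by subtree and isolating these two vertices gives
\[
Z_n^{(k)} = \Bigl(\sum_{v\in V(T_1)\setminus\{\mathrm{root}(T_1)\}}D_v(T_1)^k\Bigr)+(R_{I_n}+1)^k+\Bigl(\sum_{v\in V(T_2)\setminus\{1\}}D_v(T_2)^k\Bigr)+(\tilde R_{n-I_n}+1)^k,
\]
and writing the two bracketed sums as $Z_{I_n}^{(k)}-R_{I_n}^k$ and $\tilde Z_{n-I_n}^{(k)}-\tilde R_{n-I_n}^k$ yields exactly~\eqref{rec-Zn}. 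The initial condition $Z_1^{(k)}=0$ is immediate since the single-vertex tree has no edges.

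The only non-trivial step is the first one, verifying that deleting the left-most subtree really leaves two independent random recursive trees of the correct type and size; I would carry this out by a bijective argument counting the number of $T$'s producing a given ordered pair $(T_1,T_2)$ after relabeling, analogous to the binomial factors appearing in Lemma~\ref{dis-In}. Everything else is a deterministic per-tree identity.
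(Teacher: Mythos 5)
Your proposal is correct and follows essentially the same route as the paper: decompose the tree into the left-most subtree of the root and the remaining tree (both again random recursive trees of the respective model, independent conditionally on $I_n$), and correct the $k$-th powers at the two roots whose degrees each increase by one when the connecting edge is restored. Your write-up simply makes explicit the degree bookkeeping and the independence claim that the paper's proof states more briefly.
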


\begin{proof} The decomposition of a random (non-plane or plane) recursive tree into the left-most subtree of the root and the remaining tree gives two independent (non-plane or plane) recursive trees of size $I_n$ and $n-I_n$. Also, the Zagreb index is then the sum of Zagreb indices of these two trees where we have to correct the contributions at the two roots. For the root of the left-most subtree this correction gives the term $-R_{I_n}^k+(R_{I_n}+1)^k$ since we are missing one edge of the root. Likewise, for the remaining tree, we have the correction $-\tilde{R}_{n-I_n}^k+(\tilde{R}_{n-I_n}+1)^k$.
\end{proof}

The same argument can be used to obtain a recurrence for the (random) root degree, too.

\begin{lmm}
The degree of the root of a random (non-plane or plane) recursive tree of size $n$ satisfies, for $n\geq 2$,
\begin{equation}\label{rec-Rn}
R_n\stackrel{d}{=}\tilde{R}_{n-I_n}+1
\end{equation}
with initial condition $R_1=0$.
\end{lmm}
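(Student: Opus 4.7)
The plan is to reuse the same tree decomposition that underpins Lemma~\ref{dis-In} and the preceding proposition, but now tracking only the root degree. I split a random (non-plane or plane) recursive tree of size $n \geq 2$ into the left-most subtree of the root, of random size $I_n$, and the \emph{remaining tree} (the original tree with that subtree deleted), which has size $n - I_n$ and shares its root with the original tree.

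The argument then has two steps. First, I would check the structural fact that, conditionally on $I_n$, the remaining tree is again a random (non-plane or plane) recursive tree of size $n-I_n$, with its distribution independent of $I_n$; writing it as a fresh independent copy is what justifies introducing $\tilde{R}_{n-I_n}$. Second, I read off at the root: the root of the original tree has exactly one more incident edge than the root of the remaining tree, namely the edge going to the root of the left-most subtree. Combining these two facts gives $R_n \stackrel{d}{=} \tilde{R}_{n-I_n}+1$, and the initial condition $R_1=0$ is trivial since a one-node tree has no edges.

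The only step that needs actual work is the structural observation, and this is precisely the bookkeeping that already drives Lemma~\ref{dis-In}. For non-plane trees, the relabeling argument shows that the $(n-1)!$ recursive trees of size $n$ with $I_n=j$ split as $\binom{n-2}{j-1}(j-1)!(n-j-1)!$ (the root of the left-most subtree is forced to carry label $2$, so the binomial only chooses the remaining labels of the subtree), and the uniform measure projects accordingly onto the two smaller uniform distributions; for plane trees the counts $(2m-3)!!$ play the analogous role, exactly as in Lemma~\ref{dis-In}(ii). I expect this bookkeeping to be the main, though routine, obstacle; once it is in place, the recurrence (\ref{rec-Rn}) is immediate.
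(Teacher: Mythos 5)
Your proposal is correct and follows exactly the route the paper intends: the lemma is stated right after the decomposition argument for $Z_n^{(k)}$ with the remark that ``the same argument'' applies, namely splitting off the left-most subtree of the root so that the remaining tree is again a random recursive tree of size $n-I_n$ whose root has lost exactly the one edge to the detached subtree. Your extra bookkeeping (the remaining tree being a fresh independent copy, and node $2$ being the root of the left-most subtree in the non-plane case) is precisely the content of the counting already done in Lemma~\ref{dis-In}, so nothing is missing.
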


From this result, we see that all (non-centered or centered) moments of $R_n$ satisfy a recurrence of the type:
\begin{equation}\label{one-sided}
a_n=\sum_{j=1}^{n-1}\pi_{n,j}a_{n-j}+b_n,\qquad (n\geq 2)
\end{equation}
with initial condition $a_1=0$, where $\pi_{n,j}={\mathbb P}(I_n=j)$ and $b_n$ is a function which involves moments of lower order. By solving this recurrence (either exactly or asymptotically), a great deal of properties can be proved for the root degree; see Section~6.1.1 in \cite{Dr} for random non-plane recursive trees and \cite{Hw} for random plane recursive trees. We gather some of these properties (which are used below) in the next proposition.

\begin{pro}\label{ll-dis-Rn}
\begin{itemize}
\item[(i)] For random non-plane recursive trees, in distribution and with convergence of all moments, 
\[
\frac{R_n-\log n}{\sqrt{\log n}}\stackrel{d}{\longrightarrow} N(0,1),\qquad (n\rightarrow\infty),
\]
where $N(0,1)$ denotes a standard normal distribution.
\item[(ii)] For random plane recursive trees, in distribution and with convergence of all moments,
\[
\frac{R_n}{\sqrt{n}}\stackrel{d}{\longrightarrow}{\rm Rayleigh}(\sqrt{2}),
\]
where ${\rm Rayleigh}(\sigma)$ denotes a Rayleigh distribution with parameter $\sigma$.
\end{itemize}
\end{pro}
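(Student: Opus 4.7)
Both parts rely on the sequential attachment mechanism, which in each case puts $R_n$ in a form amenable to a classical limit theorem.

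For part (i), I would observe that the non-plane construction directly yields an independent-summand representation: letting $X_i$ be the indicator that node $i$ attaches to the root, the uniform parent rule makes $X_2,\ldots,X_n$ mutually independent with $X_i\sim\mathrm{Bernoulli}(1/(i-1))$, and $R_n=\sum_{i=2}^{n}X_i$. (This is also consistent with iterating \eqref{rec-Rn}, since for non-plane trees $n-I_n$ is uniform on $\{1,\ldots,n-1\}$.) One then reads off $\mathbb{E}R_n=H_{n-1}\sim\log n$ and $\mathrm{Var}\,R_n=H_{n-1}-H_{n-1}^{(2)}\sim\log n$, and Lyapunov's CLT---whose third-moment condition is trivial since $|X_i-\mathbb{E}X_i|\le 1$---delivers asymptotic normality of $(R_n-\log n)/\sqrt{\log n}$. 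The uniform bound on the centred summands also yields convergence of all moments, e.g.\ via a standard truncation argument.

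For part (ii), the independent-summand trick fails because the probability of attaching to the root at step $i$ is the random quantity $(R_{i-1}+1)/(2i-3)$. My plan is to embed the process in a two-colour P\'olya urn: colour the free places at the root red (initially one red ball) and the remaining free places black; a single insertion corresponds to one draw with triangular replacement matrix $\bigl(\begin{smallmatrix}1&1\\0&2\end{smallmatrix}\bigr)$, so that $R_n+1$ is exactly the number of red balls after $n-1$ draws. The eigenvalues $1<2$ have ratio $1/2$, placing the urn in the so-called small (triangular critical) regime: the minor colour scales as $\sqrt{n}$ and, after normalization, converges to a Rayleigh law. The parameter $\sigma=\sqrt{2}$ is pinned down by a direct computation of the first moment via $\mathbb{E}R_i=\mathbb{E}R_{i-1}\cdot(2i-2)/(2i-3)+1/(2i-3)$, which yields $\mathbb{E}R_n\sim\sqrt{\pi n}$, equal to $\sqrt{n}$ times the mean of $\mathrm{Rayleigh}(\sqrt{2})$. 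Convergence of all moments then follows either from Janson's general theorems on triangular urns or, in the spirit of this paper, by applying the moment-transfer approach inductively to the centred-moment recurrences of the form \eqref{one-sided}, matching the leading coefficient at each step to the corresponding moment of $\mathrm{Rayleigh}(\sqrt{2})$.

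The main obstacle is extracting the precise constant in part (ii). Unlike the uniform case, the weights $\pi_{n,j}=2(n-j)C_jC_{n-j}/(nC_n)$ are not concentrated on a single scale of $j$: with $C_j\sim 4^{j-1}/(j^{3/2}\sqrt{\pi})$, viewed as a density in $\alpha=j/n$ they behave like $n^{-1/2}\alpha^{-3/2}(1-\alpha)^{-1/2}$, so the transfer sum $\sum_{j}\pi_{n,j}a_{n-j}$ receives contributions from all ranges of $j$ and must be handled by a careful Beta-integral (or equivalent generating-function) calculation. Once that one-time calculation is in place, matching the resulting asymptotic coefficients to the moments of $\mathrm{Rayleigh}(\sqrt{2})$ is routine.
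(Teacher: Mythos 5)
Your plan is correct, and it is worth noting that the paper itself does not prove Proposition~\ref{ll-dis-Rn} at all: it simply cites Drmota (Section~6.1.1) for part~(i) and Hwang's paper on plane-oriented recursive trees for part~(ii), and remarks in passing that the proposition could alternatively be re-proved by the moment-transfer approach applied to the recurrence \eqref{one-sided}. Your part~(i) is exactly the classical route (and the one in Drmota): write $R_n=\sum_{i=2}^n X_i$ with the $X_i$ independent Bernoulli$(1/(i-1))$, compute $\mathbb{E}R_n=H_{n-1}$ and $\mathrm{Var}\,R_n\sim\log n$, and conclude by Lyapunov; moment convergence is immediate because the centred summands are uniformly bounded. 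Your part~(ii) is a genuinely different route from the cited references. Hwang proceeds by solving the one-sided moment recurrences with generating functions and singularity analysis (essentially the moment-transfer computation that the paper gestures at), whereas you embed $R_n+1$ into a triangular P\'olya urn with replacement matrix $\bigl(\begin{smallmatrix}1&1\\0&2\end{smallmatrix}\bigr)$ and initial composition $(1,0)$ and invoke the triangular-urn limit theorems. That embedding is correct: drawing a root slot adds one root slot and one non-root slot, drawing a non-root slot adds two non-root slots, and the urn size $2n-1$ matches the free-place count. Janson's moments for the minor colour of such an urn, $\mathbb{E}[W^m]=m!\sqrt{\pi}/\Gamma((m+1)/2)$, agree exactly with the paper's \eqref{asymp-Rnm} and with the moments of $\mathrm{Rayleigh}(\sqrt{2})$ via the duplication formula, so the identification of the limit (including the constant $\sigma=\sqrt{2}$, which you also pin down by the first-moment recursion $\mathbb{E}R_n\sim\sqrt{\pi n}$) is consistent. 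The urn route buys the distributional and moment convergence wholesale from general theory in one step, at the cost of importing that theory; Hwang's and the moment-transfer routes are more elementary and self-contained but require the careful Beta-integral / singularity computations you describe in your last paragraph. One small terminological slip: calling the case $a/c=1/2$ the \emph{small (critical)} regime borrows language from the classification of \emph{irreducible} urns, where the eigenvalue ratio $1/2$ separates Gaussian from non-Gaussian limits; for \emph{triangular} urns the minor colour scales as $n^{a/c}$ for every $a<c$ and the limit is never Gaussian, so ``triangular urn with $a/c=1/2$'' is the accurate description. This does not affect the validity of your argument.
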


Similarly, all (non-centered and centered) moments of $Z_n^{(k)}$ satisfy the {\it two-sided} version of~(\ref{one-sided}):
\begin{equation}\label{two-sided}
a_n=\sum_{j=1}^{n-1}\pi_{n,j}(a_j+a_{n-j})+b_n,\qquad (n\geq 2)
\end{equation}
with initial condition $a_1=0$, where $b_n$ is again a function of lower-order moments and moments of $R_n$. 

This recurrence will be the starting point of our analysis of the generalized Zagreb index. To derive the first-order asymptotics of moments and limit laws from it, we are going to apply the {\it moment-transfer approach} which we explain next. (Similarly, one could re-prove Proposition~\ref{ll-dis-Rn} with this method from (\ref{one-sided}); see \cite{ChFu}.)

\paragraph{Moment-transfer approach.} Our goal is to prove limit laws for the generalized Zagreb index of non-plane and plane recursive trees. One way of proving a limit law in probability theory is to use the {\it method of moments}; see Section~30 in \cite{Bi}. This method proceeds by (i) finding the first-order asymptotics of all moments (of the suitably normalized sequence of random variables) and (ii) identifying the limit law from the asymptotic moment sequence. The method is in particular well-suited for sequences of random variables which satisfy distributional recurrences such as (\ref{rec-Zn}) or (\ref{rec-Rn}) since all moments satisfy the same type of recurrence (see (\ref{two-sided}) for the former and (\ref{one-sided}) for the latter) which depends on the sequence $b_n$ that involves lower-order moments. Thus, induction can be applied to find the asymptotics of all moments as follows: first the induction hypothesis is used to obtain the asymptotics of $b_n$. Then, one needs (general) results which bridge the asymptotics of $b_n$ with that of $a_n$; such results are called {\it asymptotic transfer results}. These are then used to complete the induction step. In summary, this method is called the {\it moment-transfer approach}. It has been used in numerous papers analyzing parameters of random trees; see for instance \cite{ChFu,Hw,HwNe} and references therein.

Note that the distributional recurrence (\ref{rec-Zn}) of the Zagreb index depends also on $R_n$. Thus, in the induction step, we have to work with mixed moments of $Z_n^{(k)}$ and $R_n$. Consequently, we need an asymptotic transfer result not only for (\ref{two-sided}) but also for (\ref{one-sided}). Such results for both non-plane and plane recursive trees have been established in earlier work. We will state next the results which we need in Section~\ref{nprt} and Section~\ref{prt}. 

First, for non-plane recursive trees, we need a further notation. Let the (modified) $\hat{\mathcal O}$-notation be defined as the ${\mathcal O}$-notation but with logarithmic terms suppressed, e.g., ${\mathcal O}(\log^kn)=\hat{\mathcal O}(1)$ for any integer $k\geq 0$. Then, we have the following asymptotic transfer results for non-plane recursive trees; see Lemma~2 and Lemma~6 in \cite{HwNe}.

\begin{lmm}\label{at-nprt-os}
Consider (\ref{one-sided}) for non-plane recursive trees. If $b_n=\hat{\mathcal O}(n^{\alpha})$ for $\alpha\geq 0$, then $a_n=\hat{\mathcal O}(n^{\alpha})$.
\end{lmm}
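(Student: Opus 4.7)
The plan is to exploit the fact that for random non-plane recursive trees Lemma~\ref{dis-In}(i) gives $\pi_{n,j}=1/(n-1)$ for all $1\leq j\leq n-1$, so the one-sided recurrence~(\ref{one-sided}) reduces to $a_n=\frac{1}{n-1}\sum_{j=1}^{n-1}a_{n-j}+b_n=\frac{1}{n-1}\sum_{k=1}^{n-1}a_k+b_n$, i.e., a symmetric ``averaging'' recurrence. This structure admits a closed-form solution via a standard differencing trick, after which the claim reduces to a routine estimate.

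Concretely, I would first rewrite the recurrence as $(n-1)a_n=\sum_{k=1}^{n-1}a_k+(n-1)b_n$ and subtract the analogous identity at level $n-1$. The sums on the right telescope down to the single term $a_{n-1}$, yielding the first-order linear recurrence
\[
a_n-a_{n-1}=b_n-b_{n-1}+\frac{b_{n-1}}{n-1}\qquad(n\geq 2).
\]
Summing this identity from $m=2$ to $n$, the differences $b_m-b_{m-1}$ telescope, and the initial condition $a_1=0$ delivers the closed form $a_n=b_n-b_1+\sum_{m=1}^{n-1}b_m/m$ (which can be sanity-checked against $a_2=b_2$, $a_3=b_3+b_2/2$).

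Finally, I would insert the hypothesis $b_n=\hat{\mathcal O}(n^{\alpha})$, i.e., $|b_n|\leq Cn^{\alpha}\log^K n$ for some constants $C,K\geq 0$. For $\alpha>0$, integral comparison gives $\sum_{m\leq n}m^{\alpha-1}\log^K m=\mathcal O(n^{\alpha}\log^K n)=\hat{\mathcal O}(n^{\alpha})$; for $\alpha=0$, the same sum is $\mathcal O(\log^{K+1}n)=\hat{\mathcal O}(1)=\hat{\mathcal O}(n^{0})$. In either case the sum (together with the leftover terms $b_n-b_1$, which are themselves $\hat{\mathcal O}(n^{\alpha})$) is absorbed into $\hat{\mathcal O}(n^{\alpha})$, establishing the lemma. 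No genuinely delicate step is involved; the only obstacle worth flagging is bookkeeping the polylogarithmic factors, which is precisely what the modified $\hat{\mathcal O}$ notation is designed to streamline.
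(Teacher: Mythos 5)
Your proof is correct. The paper itself supplies no proof of this lemma and simply cites Lemma~2 and Lemma~6 of Neininger and Hwang, so there is nothing in the paper to compare against directly; nonetheless, the closed form you derive by differencing the recurrence, namely $a_n=b_n-b_1+\sum_{m=1}^{n-1}b_m/m$, which after cancelling the $m=1$ term becomes $a_n=b_n+\sum_{j=2}^{n-1}b_j/j$, is exactly the explicit solution the paper records and uses later (in the proof of Lemma~\ref{pos-variance}), and your split into the cases $\alpha>0$ and $\alpha=0$ handles the $\hat{\mathcal O}$ bookkeeping correctly. In short, this is an elementary, self-contained verification of the cited transfer result, fully consistent with the paper's own toolkit.
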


\begin{lmm}\label{at-nprt-ts}
Consider (\ref{two-sided}) for non-plane recursive trees.
\begin{itemize}
\item[(i)] If $b_n=\hat{\mathcal O}(n^{\alpha})$ with $0\leq \alpha<1$, then $a_n=\mu n+\hat{\mathcal O}(n^{\alpha})$ where $\mu\in{\mathbb R}$;
\item[(ii)] If $b_n\sim cn^{\alpha}$ with $\alpha>1$, then $a_n\sim c(\alpha+1)n^{\alpha}/(\alpha-1)$.
\end{itemize}
\end{lmm}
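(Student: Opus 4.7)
The plan is to exploit the flat distribution $\pi_{n,j}=1/(n-1)$ from Lemma~\ref{dis-In}(i), which collapses (\ref{two-sided}) to the one-dimensional recurrence
\begin{equation*}
a_n = \frac{2}{n-1}\sum_{j=1}^{n-1} a_j + b_n.
\end{equation*}
From here, the standard ``take first differences'' trick applies: writing the same identity at $n$ and $n-1$, clearing denominators and subtracting removes the cumulative sum and leaves the first-order recurrence
\begin{equation*}
a_n = \frac{n}{n-1}\, a_{n-1} + b_n - \frac{n-2}{n-1}\, b_{n-1}.
\end{equation*}
Dividing by $n$ makes the left-hand side telescope; a short partial-fraction manipulation (noting that $\tfrac{1}{n}-\tfrac{n-1}{n(n+1)}=\tfrac{2}{n(n+1)}$) after reindexing the inhomogeneous piece then yields the closed form
\begin{equation*}
a_N = b_N + 2N \sum_{n=2}^{N-1} \frac{b_n}{n(n+1)}.
\end{equation*}

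With this formula in hand the two cases reduce to standard tail/asymptotic estimates. For (i), since $b_n/(n(n+1)) = \hat{\mathcal O}(n^{\alpha-2})$ and $\alpha<1$, the series $S := \sum_{n \geq 2} b_n/(n(n+1))$ converges absolutely; setting $\mu := 2S$ and writing the finite sum as $S$ minus its tail, the tail is $\hat{\mathcal O}(N^{\alpha-1})$, so that $2N$ times the tail is $\hat{\mathcal O}(N^{\alpha})$ and we conclude $a_N = \mu N + \hat{\mathcal O}(N^{\alpha})$. For (ii), with $\alpha>1$ the partial sum diverges and an integral (or Euler--Maclaurin) comparison yields $\sum_{n=2}^{N-1} b_n/(n(n+1)) \sim c\,N^{\alpha-1}/(\alpha-1)$; combining with $b_N \sim c N^{\alpha}$ and simplifying $1 + 2/(\alpha-1) = (\alpha+1)/(\alpha-1)$ produces the stated $c(\alpha+1)/(\alpha-1)\cdot N^{\alpha}$.

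I expect the main obstacle to be the careful bookkeeping of the suppressed logarithmic factors hidden inside $\hat{\mathcal O}$ in part (i): one must verify that the tail estimate $\sum_{n \geq N} n^{\alpha-2}(\log n)^K \sim N^{\alpha-1}(\log N)^K/(1-\alpha)$ goes through so that multiplying by $N$ still lands inside $\hat{\mathcal O}(N^{\alpha})$, and that the constant $\mu = 2\sum_{n\geq 2} b_n/(n(n+1))$ is genuinely finite (not merely bounded by a polylog). Both facts follow from straightforward integral comparison, but they are the only non-mechanical steps in the argument; everything else is algebra on the explicit formula derived in the first paragraph.
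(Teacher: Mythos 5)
Your proposal is correct. The paper does not prove this lemma itself but cites Lemmas~2 and~6 of \cite{HwNe}, and your argument is essentially the same standard one used there: difference the recurrence to kill the cumulative sum, telescope $a_n/n$, and read off the exact solution $a_N=b_N+2N\sum_{n=2}^{N-1}b_n/(n(n+1))$ (which I verified, and which is consistent with the paper's formula (\ref{exp-mu}) for $\mu$); the tail estimates you flag in both cases, including the polylogarithmic factors hidden in $\hat{\mathcal O}$ for part (i) and the comparison $\sum_{n\le N}n^{\alpha-2}\sim N^{\alpha-1}/(\alpha-1)$ for part (ii), all go through exactly as you describe.
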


\begin{rem}
The constant in part (i) of the above lemma is given by:
\begin{equation}\label{exp-mu}
\mu=2\sum_{j=2}^{\infty}\frac{b_j}{j(j+1)}.
\end{equation}
Note that, under the assumption of part (i), the series converges.
\end{rem}

Likewise, we have similar results for plane recursive trees; see \cite{Hw}.

\begin{lmm}\label{at-prt-os}
Consider (\ref{one-sided}) for plane recursive trees. If $b_n\sim cn^{\alpha}$ with $\alpha>-1/2$, then $a_n\sim c\Gamma(\alpha+1/2)n^{\alpha+1/2}/\Gamma(\alpha+1)$, where $\Gamma(z)$ denotes the gamma function.
\end{lmm}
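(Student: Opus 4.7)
The plan is to reduce the recurrence~(\ref{one-sided}) to a pure convolution, derive an exact formula for $a_n$ via generating functions, and then extract the asymptotic by Stirling's formula and a Riemann-sum argument. The first key step is a change of summation index: substituting $k=n-j$ and inserting the explicit form $\pi_{n,n-k}=2kC_kC_{n-k}/(nC_n)$ from Lemma~\ref{dis-In}(ii), then multiplying by $nC_n$ and setting $u_n:=nC_na_n$, $v_n:=nC_nb_n$, transforms~(\ref{one-sided}) into the convolution
\[
u_n \;=\; v_n + 2\sum_{k=1}^{n-1}C_{n-k}\,u_k, \qquad (n\geq 1).
\]
Passing to ordinary generating functions and using the classical identity $C(z)=\sum_{n\geq 1}C_nz^n=(1-\sqrt{1-4z})/2$, which gives $1-2C(z)=\sqrt{1-4z}$, one obtains the strikingly clean closed form $U(z)=V(z)/\sqrt{1-4z}$. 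Expanding $(1-4z)^{-1/2}=\sum_{m\geq 0}\binom{2m}{m}z^m$ and using $nC_n=\binom{2n-2}{n-1}$ yields the exact representation
\[
a_n \;=\; \frac{1}{\binom{2n-2}{n-1}}\sum_{k=1}^{n}\binom{2(n-k)}{n-k}\binom{2k-2}{k-1}b_k.
\]

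From here I would apply Stirling's approximations $\binom{2n-2}{n-1}\sim 4^n/(4\sqrt{\pi n})$, $\binom{2k-2}{k-1}\sim 4^k/(4\sqrt{\pi k})$ and $\binom{2(n-k)}{n-k}\sim 4^{n-k}/\sqrt{\pi(n-k)}$. All powers of $4$ cancel exactly, leaving
\[
a_n \;\sim\; \frac{\sqrt{n}}{\sqrt{\pi}}\sum_{k=1}^{n-1}\frac{b_k}{\sqrt{k(n-k)}}.
\]
Substituting the hypothesis $b_k\sim ck^\alpha$ and recognizing the sum as a Riemann sum for $cn^\alpha\int_0^1 x^{\alpha-1/2}(1-x)^{-1/2}\,dx$, the beta-function evaluation $B(\alpha+1/2,1/2)=\sqrt{\pi}\,\Gamma(\alpha+1/2)/\Gamma(\alpha+1)$ produces the stated constant $c\,\Gamma(\alpha+1/2)/\Gamma(\alpha+1)$. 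A cleaner alternative is to apply singularity analysis directly to $U(z)=V(z)/\sqrt{1-4z}$: the hypothesis $b_n\sim cn^\alpha$ forces $V(z)$ to exhibit an algebraic singularity at $z=1/4$ of type $(1-4z)^{-\alpha-1/2}$, which is promoted to exponent $-\alpha-1$ by the prefactor $(1-4z)^{-1/2}$, and standard transfer theorems then yield the claim.

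The main obstacle lies in justifying the Riemann-sum passage uniformly near the endpoints. The integrand $x^{\alpha-1/2}(1-x)^{-1/2}$ is integrable at $x=1$ unconditionally, but at $x=0$ only when $\alpha>-1/2$; thus the stated hypothesis is precisely what is needed and cannot be weakened. To make the argument rigorous, I would split the sum into a bulk range $\varepsilon n\leq k\leq (1-\varepsilon)n$, where the Stirling estimates and Riemann-sum convergence are immediate, and two boundary pieces which are controlled separately using the uniform bound $|b_k|\leq c'k^\alpha$ together with direct summation of the $k^{\alpha-1/2}$ and $(n-k)^{-1/2}$ tails. The singularity-analysis route sidesteps these endpoint technicalities entirely and is probably the cleaner write-up.
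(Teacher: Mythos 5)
The paper does not prove this lemma at all --- it simply cites \cite{Hw} --- so your self-contained derivation is a genuine addition rather than a rederivation. Your main line of argument is correct: with $\pi_{n,j}=2(n-j)C_jC_{n-j}/(nC_n)$ and the index change $k=n-j$, the substitution $u_n=nC_na_n$, $v_n=nC_nb_n$ does linearize the recurrence to $u_n=v_n+2\sum_{k<n}C_{n-k}u_k$, and $1-2C(z)=\sqrt{1-4z}$ gives $U(z)=V(z)/\sqrt{1-4z}$, hence the exact convolution formula for $a_n$; the Stirling cancellation and the beta integral $B(\alpha+1/2,1/2)=\sqrt{\pi}\,\Gamma(\alpha+1/2)/\Gamma(\alpha+1)$ then produce exactly the stated constant, and your endpoint splitting (bulk plus two boundary ranges, using $\alpha>-1/2$ for integrability at $x=0$) is the standard and adequate way to justify the Riemann-sum limit. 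One caution: your ``cleaner alternative'' via singularity analysis is actually the \emph{less} rigorous route as stated, because the implication runs the wrong way --- knowing $v_n\sim c'4^nn^{\alpha-1/2}$ does \emph{not} force $V(z)$ to have an isolated algebraic singularity of type $(1-4z)^{-\alpha-1/2}$ with an analytic continuation to a $\Delta$-domain (transfer theorems deduce coefficient asymptotics from singular expansions, not conversely). So keep the elementary convolution argument as the proof and, if you mention singularity analysis at all, present it only as a heuristic. A last small point worth a sentence in a careful write-up: the Stirling estimates for $\binom{2k-2}{k-1}$ fail for bounded $k$, but those terms contribute $O(1)$ each (since $\binom{2(n-k)}{n-k}/\binom{2n-2}{n-1}\to 4^{1-k}$), which is $o(n^{\alpha+1/2})$ because $\alpha+1/2>0$.
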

\begin{lmm}\label{at-prt-ts}
Consider (\ref{two-sided}) for plane recursive trees.
\begin{itemize}
\item[(i)] If $b_n\sim c\sqrt{n}$ then $a_n\sim cn\log n/\sqrt{\pi}$;
\item[(ii)] If $b_n\sim cn^{\alpha}$ with $\alpha>1/2$, then $a_n\sim c\Gamma(\alpha-1/2)n^{\alpha+1/2}/\Gamma(\alpha)$.
\end{itemize}
\end{lmm}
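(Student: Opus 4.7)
The plan is to reduce (\ref{two-sided}) to a single clean functional equation via generating functions and then extract the stated asymptotics by singularity analysis. The key algebraic observation is that although the transition probabilities $\pi_{n,j}=2(n-j)C_jC_{n-j}/(nC_n)$ from Lemma~\ref{dis-In}(ii) are asymmetric in $j\mapsto n-j$, their symmetrization simplifies dramatically:
\[
\pi_{n,j}+\pi_{n,n-j}=\frac{2C_jC_{n-j}}{C_n}.
\]
Since $a_j+a_{n-j}$ is itself symmetric in $j\mapsto n-j$, the recurrence (\ref{two-sided}) rewrites as $C_n a_n=2\sum_{j=1}^{n-1}C_jC_{n-j}a_j+C_nb_n$, a pure Catalan convolution.

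Next I would pass to the generating functions $A(z)=\sum_{n\geq1}C_na_nz^n$, $B(z)=\sum_{n\geq1}C_nb_nz^n$, and $C(z)=\sum_{n\geq1}C_nz^n=(1-\sqrt{1-4z})/2$. The convolution identity becomes $A(z)=2C(z)A(z)+B(z)$, and since $1-2C(z)=\sqrt{1-4z}$, one obtains the clean closed form
\[
A(z)=\frac{B(z)}{\sqrt{1-4z}}.
\]
Everything now reduces to reading off coefficient asymptotics of $A(z)$ from those of $B(z)$ via singularity analysis at $z=1/4$.

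Using $C_n\sim 4^{n-1}/(n^{3/2}\sqrt{\pi})$, the hypothesis $b_n\sim cn^\alpha$ translates into $[z^n]B(z)=C_nb_n\sim(c/(4\sqrt{\pi}))4^n n^{\alpha-3/2}$. For $\alpha>1/2$ the Flajolet--Odlyzko transfer theorem matches this with the singular expansion $B(z)\sim(c\Gamma(\alpha-1/2)/(4\sqrt{\pi}))(1-4z)^{-(\alpha-1/2)}$, so dividing by $\sqrt{1-4z}$ gives $A(z)\sim(c\Gamma(\alpha-1/2)/(4\sqrt{\pi}))(1-4z)^{-\alpha}$; transferring back yields $[z^n]A(z)\sim(c\Gamma(\alpha-1/2)/(4\sqrt{\pi}\Gamma(\alpha)))4^n n^{\alpha-1}$, and dividing by the asymptotics of $C_n$ produces part~(ii). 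Part~(i) is the boundary case $\alpha=1/2$, where the relevant singular expansion of $B$ is logarithmic, $B(z)\sim-(c/(4\sqrt{\pi}))\log(1-4z)$; the standard estimate $[z^n](1-4z)^{-1/2}\log(1/(1-4z))\sim 4^n\log n/\sqrt{\pi n}$ then supplies the extra $\log n$ factor and produces the constant $c/\sqrt{\pi}$ after dividing by $C_n$.

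The one genuine obstacle is justifying the singularity-analysis transfer from the raw statement $b_n\sim cn^\alpha$ to a singular expansion of $B(z)$, which strictly speaking needs a little more than coefficient asymptotics (one wants $\Delta$-analyticity and a uniform singular expansion, not just a first-order coefficient estimate). In the intended applications, however, the sequences $b_n$ arise inductively from moments that come with full asymptotic expansions and polynomially controlled error terms, so the singularity-analysis hypotheses are automatic; alternatively one can invoke Hwang's transfer framework~\cite{Hw}, which was designed precisely for this class of Catalan-weighted recurrences and handles the power-type case (ii) and the logarithmic boundary case (i) uniformly.
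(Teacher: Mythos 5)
Your argument is correct, and it is worth noting that the paper offers no proof of this lemma at all --- it is quoted from Hwang \cite{Hw} --- so what you have produced is a self-contained derivation rather than a variant of the paper's argument. Your key symmetrization $\pi_{n,j}+\pi_{n,n-j}=2C_jC_{n-j}/C_n$, which turns (\ref{two-sided}) into the pure Catalan convolution $C_na_n=2\sum_{j}C_jC_{n-j}a_j+C_nb_n$ and hence into $A(z)=B(z)/\sqrt{1-4z}$, is exactly the manipulation the paper itself performs later (in Section~\ref{prt}, when computing the closed form of ${\mathbb E}(Z_n^{(2)})$), so your route is very much in the spirit of the source; the constants in both parts check out against the statement. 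The one soft spot is the one you flag yourself: passing from $b_n\sim cn^{\alpha}$ to a singular expansion of $B(z)$ is the \emph{converse} direction of Flajolet--Odlyzko transfer and is not valid from a first-order coefficient estimate alone. But this is easily repaired without any appeal to extra regularity of $b_n$: the identity $A(z)=B(z)(1-4z)^{-1/2}$ gives the exact, nonnegative convolution $C_na_n=\sum_{j=1}^{n}C_jb_j\binom{2(n-j)}{n-j}$, and plugging in $C_jb_j\sim (c/(4\sqrt{\pi}))4^jj^{\alpha-3/2}$ and $\binom{2m}{m}\sim 4^m/\sqrt{\pi m}$ reduces everything to the elementary beta-sum estimate $\sum_j j^{\alpha-3/2}(n-j)^{-1/2}\sim n^{\alpha-1}\Gamma(\alpha-1/2)\Gamma(1/2)/\Gamma(\alpha)$ for $\alpha>1/2$, with the boundary case $\alpha=1/2$ producing the harmonic-sum logarithm of part (i) (consistent with the exact coefficient $\binom{2n}{n}(2H_{2n}-H_n)$ that the paper records). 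Phrased this way your proof is complete and elementary, and arguably more transparent than the reference it replaces.
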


\section{Non-plane Recursive Trees}\label{nprt}

In this section, we prove a central limit theorem for the generalized Zagreb index of random non-plane recursive trees for all powers. We start by stating the main result of this section.

\begin{thm}\label{main-result-1}
For random non-plane recursive trees, we have the following limit distribution result for the generalized Zagreb index: for $k\geq 2$,
\[
\frac{Z_n^{(k)}-\mu_k n}{\sigma_k\sqrt{n}}\stackrel{d}{\longrightarrow} N(0,1),\qquad (n\rightarrow\infty),
\]
where $\mu_k$ and $\sigma_k$ are positive constants which are given in (\ref{exp-mean}) and the proof of Lemma~\ref{pos-variance}.
\end{thm}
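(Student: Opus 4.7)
The plan is to apply the moment-transfer approach of Section~2 to the recurrence (\ref{rec-Zn}) and conclude by the method of moments, since $N(0,1)$ is determined by its moments. Writing $\bar{Z}_n$ for $Z_n^{(k)}-\mathbb{E}[Z_n^{(k)}]$, the argument inducts on the order $m$ of the centered moment $\mathbb{E}[\bar{Z}_n^m]$, but simultaneously tracks mixed moments $\mathbb{E}[\bar{Z}_n^a R_n^b]$ because the toll in (\ref{rec-Zn}) depends on the random root degrees.

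For the mean, take expectations in (\ref{rec-Zn}) and expand $(R+1)^k-R^k=\sum_{i=0}^{k-1}\binom{k}{i}R^i$; by Proposition~\ref{ll-dis-Rn}(i) every moment of $R_n$ is polylogarithmic, so the resulting toll in the recurrence of type (\ref{two-sided}) is $\hat{\mathcal{O}}(1)$, and Lemma~\ref{at-nprt-ts}(i) yields $\mathbb{E}[Z_n^{(k)}]=\mu_k n+\hat{\mathcal{O}}(1)$ with $\mu_k$ given by (\ref{exp-mu}). Subtracting produces the centered recurrence $\bar{Z}_n\stackrel{d}{=}\bar{Z}_{I_n}+\bar{\tilde{Z}}_{n-I_n}+C_n$, where $C_n$ is a mean-zero toll whose every (even) moment is $\hat{\mathcal{O}}(1)$.

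The core step is to show by induction on $m$ that
\[
\mathbb{E}[\bar{Z}_n^m]=\sigma_k^m(m-1)!!\,n^{m/2}+\hat{\mathcal{O}}(n^{(m-1)/2}) \text{ for even } m, \qquad \mathbb{E}[\bar{Z}_n^m]=\hat{\mathcal{O}}(n^{(m-1)/2}) \text{ for odd } m,
\]
matching the moments of $N(0,\sigma_k^2 n)$. For $m=2$, squaring the centered recurrence and taking expectations gives a recurrence of type (\ref{two-sided}) with toll $\mathbb{E}[C_n^2]+4\sum_j\pi_{n,j}\sum_{i=0}^{k-1}\binom{k}{i}\mathbb{E}[\bar{Z}_j R_j^i]$; a short bootstrap via Cauchy--Schwarz (starting from a crude polynomial bound on $\mathbb{E}[\bar{Z}_n^2]$) reduces this toll to $\hat{\mathcal{O}}(\sqrt{n})$, and Lemma~\ref{at-nprt-ts}(i) then yields $\mathbb{E}[\bar{Z}_n^2]=\sigma_k^2 n+\hat{\mathcal{O}}(\sqrt{n})$, with $\sigma_k^2$ the convergent sum of the form (\ref{exp-mu}); positivity follows from the fact that $C_n$ has strictly positive variance for, say, $n=2$. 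For $m\geq 3$, expanding $\bar{Z}_n^m$ by the multinomial theorem and taking expectations produces a recurrence of type (\ref{two-sided}) whose toll is dominated, for even $m$, by the cross-terms $\binom{m}{a,b,0}\mathbb{E}[\bar{Z}_{I_n}^a]\mathbb{E}[\bar{\tilde{Z}}_{n-I_n}^b]$ with $a+b=m$ and $a,b\geq 2$ both even; by the induction hypothesis and a standard beta-integral evaluation this toll is $\sim K_m n^{m/2}$, and Lemma~\ref{at-nprt-ts}(ii) with $\alpha=m/2$ delivers the claimed leading term (the coefficient $K_m(m+2)/(m-2)$ reduces to $\sigma_k^m(m-1)!!$, matching the Gaussian moment recursion). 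All remaining multinomial terms, as well as the odd-$m$ case, are of strictly smaller order by the induction hypothesis combined with the bounded moments of $C_n$.

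The main obstacle will be bookkeeping the mixed moments $\mathbb{E}[\bar{Z}_n^a R_n^b]$ that arise whenever $C_n$ couples to $\bar{Z}_{I_n}$ through $R_{I_n}$; these are controlled by a parallel induction, writing a one-sided recurrence of type (\ref{one-sided}) for each fixed power of $R_n$ and applying Lemma~\ref{at-nprt-os}, with Proposition~\ref{ll-dis-Rn}(i) providing polylogarithmic inputs. A secondary point is verifying that the beta-integral coefficients $K_m$, multiplied by the factor from Lemma~\ref{at-nprt-ts}(ii), indeed reproduce $\sigma_k^m(m-1)!!$; this is forced once one checks the Gaussian moment recursion inductively from the $m=2$ base.
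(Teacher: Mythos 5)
Your overall strategy coincides with the paper's: center at the mean, run a lexicographic induction over the mixed moments ${\mathbb E}(\bar Z_n^rR_n^s)$ (one-sided transfer, Lemma~\ref{at-nprt-os}, for $s>0$; two-sided transfers, Lemma~\ref{at-nprt-ts}, for $s=0$), identify the leading toll for even $r$ via a beta integral, check that the transfer constant reproduces the Gaussian moment recursion, and conclude by the method of moments. This is exactly Proposition~\ref{mom-pump} and the surrounding argument. One cosmetic difference: your Cauchy--Schwarz bootstrap for the $r=2$ toll is unnecessary, since in the lexicographic order the moments ${\mathbb E}(\bar Z_nR_n^s)$ are handled before ${\mathbb E}(\bar Z_n^2)$ and their one-sided tolls involve only $(1,s')$ with $s'<s$ and pure moments of $R_n$ (the term ${\mathbb E}(\bar Z_j){\mathbb E}((\tilde R_{n-j}+1)^s)$ vanishes), so Lemma~\ref{at-nprt-os} gives ${\mathbb E}(\bar Z_nR_n^s)=\hat{\mathcal O}(n^{1/2})$ with no a priori bound on the second moment.

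The genuine gap is your justification that $\sigma_k>0$. The constant $\sigma_k^2$ is the series (\ref{exp-mu}) applied to the toll of the second-moment recurrence,
\[
\sigma_k^2=2\sum_{j\geq 2}\frac{b_j}{j(j+1)},\qquad b_n={\mathbb E}(T_{n,I_n}^2)+4\sum_{\ell=0}^{k-1}\binom{k}{\ell}{\mathbb E}(\bar Z_{I_n}R_{I_n}^{\ell}),
\]
and the second summand is a covariance between $\bar Z_{I_n}$ and powers of the root degree whose sign is not evident; a priori some $b_j$ could be negative and the series could vanish. Since $\sigma_k^2\geq 0$ is automatic (it is a limit of $\mathrm{Var}(Z_n)/n$), the only issue is ruling out $\sigma_k=0$, i.e., $\mathrm{Var}(Z_n)=o(n)$, and ``the toll has strictly positive variance at one value of $n$'' does not do this --- it would give at best $b_{n_0}>0$ for one $n_0$, not $b_j\geq 0$ for all $j$. (Incidentally, at $n=2$ the toll is deterministic and $b_2=0$, so even the cited instance fails.) The paper closes this in Lemma~\ref{pos-variance} by a separate induction on $\ell$ proving ${\mathbb E}(\bar Z_nR_n^{\ell})\geq 0$, which rests on the exact solution of the one-sided recurrence with uniform weights and on the correlation inequality ${\mathbb E}(R^{i+j})\geq{\mathbb E}(R^i){\mathbb E}(R^j)$. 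You need this lemma, or an equivalent nondegeneracy argument, before the normalization by $\sigma_k\sqrt{n}$ in the theorem is legitimate; the rest of your outline is sound.
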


\begin{rem}
As mentioned in the introduction, the limit law for $k=2$ (first Zagreb index) was already obtained in \cite{FeHu}. Our method works for this case, too, and gives an alternative proof.
\end{rem}

We are going to prove the above result by computing the asymptotics of all moments. We first derive the mean and then consider higher moments. The starting point is (\ref{rec-Zn}) which we bring into the form:

\begin{equation}\label{rec-Zn-mod}
Z_n^{(k)}\stackrel{d}{=}Z_{I_n}^{(k)}+\tilde{Z}_{n-I_n}^{(k)}+\sum_{\ell=0}^{k-1}\binom{k}{\ell}(R_{I_n}^{\ell}+\tilde{R}_{n-I_n}^{\ell}).
\end{equation}

\paragraph{Mean value.} By taking expectation on both sides of (\ref{rec-Zn-mod}), we see that the mean $a_n:={\mathbb E}(Z_n^{(k)})$ satisfies (\ref{two-sided}) with 
\begin{equation}\label{mean-nprt-bn}
b_n=\sum_{\ell=0}^{k-1}\binom{k}{\ell}\frac{2}{n-1}\sum_{j=1}^{n-1}{\mathbb E}(R_j^{\ell}).
\end{equation}
Next, by moment convergence in Proposition~\ref{ll-dis-Rn}-(i):
\begin{equation}\label{mom-Rn}
{\mathbb E}(R_n-\log n)^m=\hat{{\mathcal O}}(1),\qquad (m\geq 0),
\end{equation}
which in turn implies that ${\mathbb E}(R_n^{m})=\hat{{\mathcal O}}(1)$ for $ m\geq 0$. Plugging this into (\ref{mean-nprt-bn}) gives $b_n=\hat{\mathcal O}(1)$. Thus, from Lemma~\ref{at-nprt-ts}-(i):
\begin{equation}\label{exp-mean}
\mu(n):={\mathbb E}(Z_n^{(k)})=\mu_kn+\hat{\mathcal O}(1),
\end{equation}
where $\mu_k$ is a suitable constant given by (\ref{exp-mu}). (This also shows that $\mu_k>0$ as $b_n>0$ for $n\geq 3$.)

For small values of $k$, the constant $\mu_k$ can be computed. For example, for $k=2$, the above expression (\ref{mean-nprt-bn}) for $b_n$ becomes
\[
b_n=2+\frac{4}{n-1}\sum_{j=1}^{n-1}{\mathbb E}(R_j).
\]
Note that ${\mathbb E}(R_n)=H_{n-1}$, where $H_{n-1}:=\sum_{j=1}^{n-1}(1/j)$ is the $(n-1)$-st harmonic number. This can easily be derived from (\ref{rec-Rn}); see also Section 6.1.1 in \cite{Dr}. Thus,
\[
b_n=2+\frac{4}{n-1}\sum_{j=1}^{n-1}H_{j-1}=4\Psi(n)+4\gamma-2,
\]
where $\gamma$ is Euler's constant and $\Psi(n)$ denotes the digamma function. Plugging this into (\ref{exp-mu}) gives:
\[
\mu_2=8\sum_{j=2}^{\infty}\frac{\Psi(j)}{j(j+1)}+(2\gamma-4)\sum_{j=2}^{\infty}\frac{1}{j(j+1)}=6,
\]
where the last step follows from
\[
\sum_{j=2}^{\infty}\frac{1}{j(j+1)}=\frac{1}{2}\qquad\text{and}\qquad\sum_{j=2}^{\infty}\frac{\Psi(j)}{j(j+1)}=1-\frac{\gamma}{2}.
\]
This value is the same as the one obtained in \cite{FeHu}. 

\paragraph{Higher moments.} We next consider the asymptotics of higher moments, where we consider central moments.  Therefore, set
\[
\bar{Z}_n^{(k)}:=Z_n^{(k)}-\mu(n).
\]
Then, (\ref{rec-Zn-mod}) becomes
\begin{equation}\label{rec-Zn-shift}
\bar{Z}_n^{(k)}\stackrel{d}{=}\bar{Z}_{I_n}^{(k)}+\tilde{\bar{Z}}_{n-I_n}^{(k)}+\sum_{\ell=0}^{k-1}\binom{k}{\ell}(R_{I_n}^{\ell}
+\tilde{R}_{n-I_n}^{\ell})+\Delta(n),
\end{equation}
where
\[
\Delta(n):=\mu(I_n)+\mu(n-I_n)-\mu(n).
\]
Note that from (\ref{exp-mean}), for $1\leq j\leq n-1$,
\begin{align}
(\Delta(n)\vert I_n=j)&=\mu(j)+\mu(n-j)-\mu(n)\nonumber\\
&=\mu_kj+\mu_k(n-j)-\mu_k n+\hat{\mathcal O}(1)=\hat{\mathcal O}(1).\label{est-Delta}
\end{align}

Since the recurrence (\ref{rec-Zn-shift}) depends on $R_n$, we need to consider mixed moments. We first make the following important observation. (From now on, we suppress the dependence on $k$.)

\begin{lmm}\label{obs}
The mixed moments ${\mathbb E}(\bar{Z}_n^rR_n^s)$ for $s>0$ satisfy (\ref{one-sided}) and for $s=0$ satisfy (\ref{two-sided}), where $b_n$ is in both cases a function of $(\Delta(n)\vert I_n=j)$ and mixed moments of smaller order with respect to the lexicographic order.  
\end{lmm}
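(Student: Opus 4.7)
The plan is to combine the two distributional recurrences (\ref{rec-Zn-shift}) and (\ref{rec-Rn}) and apply the multinomial theorem to $\bar{Z}_n^r R_n^s$. The key observation is that the variable $\tilde{R}_{n-I_n}$ appearing in (\ref{rec-Zn-shift}) coincides with the one in (\ref{rec-Rn}): both represent the degree of the root of the ``remaining tree'' produced by the decomposition into the left-most subtree and the rest. To exploit this cleanly, I would rewrite (\ref{rec-Zn-shift}) as
\[
\bar{Z}_n^{(k)}\stackrel{d}{=}A_n+B_n+\Delta(n),
\]
where $A_n:=\bar{Z}_{I_n}^{(k)}+\sum_{\ell=0}^{k-1}\binom{k}{\ell}R_{I_n}^{\ell}$ depends only on the left-most subtree and $B_n:=\tilde{\bar{Z}}_{n-I_n}^{(k)}+\sum_{\ell=0}^{k-1}\binom{k}{\ell}\tilde{R}_{n-I_n}^{\ell}$ depends only on the remaining tree; conditionally on $I_n$, $A_n$ is independent of $(B_n,\tilde{R}_{n-I_n})$, and $\Delta(n)$ is deterministic.

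Conditioning on $I_n=j$ and expanding via the multinomial theorem,
\[
\bar{Z}_n^r R_n^s=\sum_{r_1+r_2+r_3=r}\binom{r}{r_1,r_2,r_3}A_n^{r_1}B_n^{r_2}\Delta(n)^{r_3}(\tilde{R}_{n-j}+1)^s,
\]
I would further expand $A_n^{r_1}$, $B_n^{r_2}$ and $(\tilde{R}_{n-j}+1)^s$ binomially and take expectations. For $s>0$, I expect the unique ``leading'' contribution to come from $(r_1,r_2,r_3)=(0,r,0)$ together with the $\tilde{\bar{Z}}_{n-j}^{r}$ term in $B_n^r$ and the $\tilde{R}_{n-j}^{s}$ term in $(\tilde{R}_{n-j}+1)^s$, producing $\mathbb{E}(\bar{Z}_{n-j}^r R_{n-j}^s)$ which, after averaging against $\pi_{n,j}$, gives the one-sided form (\ref{one-sided}). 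For $s=0$, the factor $R_n^s$ disappears and two leading terms survive, $\mathbb{E}(\bar{Z}_j^r)$ from $r_1=r$ and $\mathbb{E}(\bar{Z}_{n-j}^r)$ from $r_2=r$, yielding the two-sided form (\ref{two-sided}).

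It then remains to verify that everything else is absorbed into a $b_n$ depending only on $(\Delta(n)\vert I_n=j)$ and on mixed moments $\mathbb{E}(\bar{Z}_m^{r'}R_m^{s'})$ with $(r',s')<_{\mathrm{lex}}(r,s)$. I would argue by cases: (i) if $r_1>0$ or $r_3>0$, then $r_2<r$, and every mixed-moment factor produced carries a $\bar{Z}$-power bounded above by $r_1$ or $r_2$, hence strictly less than $r$; (ii) in the block $(0,r,0)$, the non-leading choices in the inner binomial expansion either drop the $\bar{Z}$-power strictly below $r$ or, when the $\bar{Z}$-power stays at $r$, force the exponent of $\tilde{R}_{n-j}$ to drop strictly below $s$. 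The main obstacle is the latter subcase of (ii): the $\bar{Z}$-power is preserved and only the $R$-power decreases, which is exactly why the \emph{lexicographic} (rather than total-degree) order is essential in the statement. Once this case analysis is done, summing the resulting conditional identity against $\pi_{n,j}=\mathbb{P}(I_n=j)$ over $j=1,\dots,n-1$ yields the recurrences in the form required by the lemma.
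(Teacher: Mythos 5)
Your proposal is correct in substance and follows essentially the same route as the paper: condition on $I_n=j$, exploit the coupling $R_n=\tilde{R}_{n-I_n}+1$, expand multinomially, identify the leading term ${\mathbb E}(\bar{Z}_{n-j}^r R_{n-j}^s)$ with coefficient $1$ (plus the second leading term ${\mathbb E}(\bar{Z}_j^r)$ when $s=0$), and push all remaining terms into $b_n$ via the lexicographic order. One small slip: for $s>0$ the block $(r_1,r_2,r_3)=(r,0,0)$ produces the factor ${\mathbb E}(\bar{Z}_j^{r})$, whose $\bar{Z}$-power is \emph{not} strictly less than $r$ as asserted in your case (i) --- that term is absorbed into $b_n$ only because its $R$-exponent $0$ is smaller than $s$, i.e., by the same lexicographic mechanism you correctly invoke in case (ii).
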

\begin{proof}
Raising (\ref{rec-Zn-shift}) to the $r$-th power gives:
\begin{align*}
{\mathbb E}(\bar{Z}_n^r)&=\frac{1}{n-1}\sum_{j=1}^{n-1}{\mathbb E}\left(\bar{Z}_{j}+\tilde{\bar{Z}}_{n-j}+\sum_{\ell=0}^{k-1}\binom{k}{\ell}(R_j^{\ell}
+\tilde{R}_{n-j}^{\ell})+(\Delta(n)\vert I_n=j)\right)^r\\
&=\frac{1}{n-1}\sum_{j=1}^{n-1}\sum_{i_1+i_2+i_3=r}\binom{r}{i_1,i_2,i_3}{\mathbb E}(\bar{Z}_j^{i_1}\tilde{\bar{Z}}_{n-j}^{i_2}T_{n,j}^{i_3}),
\end{align*}
where
\[
T_{n,j}:=\sum_{\ell=0}^{k-1}\binom{k}{\ell}(R_j^{\ell}
+\tilde{R}_{n-j}^{\ell})+(\Delta(n)\vert I_n=j).
\]
Likewise, for the mixed moments, by (\ref{rec-Rn}):
\begin{equation}\label{rec-mixed-moments}
{\mathbb E}(\bar{Z}_n^rR_n^s)=\frac{1}{n-1}\sum_{j=1}^{n-1}\sum_{i_1+i_2+i_3=r}\binom{r}{i_1,i_2,i_3}{\mathbb E}(\bar{Z}_j^{i_1}\tilde{\bar{Z}}_{n-j}^{i_2}T_{n,j}^{i_3}(\tilde{R}_{n-j}+1)^s).
\end{equation}
Expanding $T_{n,j}^{i_3}$ and $(\tilde{R}_{n-j}+1)^s$ in the above mean gives terms of the form
\begin{equation}\label{terms}
{\mathbb E}(\bar{Z}_j^{i_1}R_j^{i}){\mathbb E}(\bar{Z}_{n-j}^{i_2}R_{n-j}^{\ell}),
\end{equation}
where $i\leq i_3(k-1)$, $\ell\leq i_3(k-1)+s$, and $i_1+i_2+i_3=r$. With respect to the lexicographic order, a highest-order term is obtained if we set $i_2=r$ and $\ell=s$ which gives for (\ref{terms}) ${\mathbb E}(\bar{Z}_{n-j}^r R_{n-j}^{s})$ since $i_1=i_3=0$ and thus $i=0$. Also note that the coefficient of this term is $1$. If $s>0$, the orders of all other terms are strictly smaller with respect to the lexicographic order which gives the first claim. On the other hand, for $s=0$, we have another highest-order term, namely, by setting $i_1=r$ and thus $i_2=i_3=0$ which in turn gives $i=\ell=0$, we obtain ${\mathbb E}(\bar{Z}_j^{r})$ and thus the recurrence in this case is indeed two-sided. 
\end{proof}

We now use this lemma and induction to obtain the following proposition.

\begin{pro}\label{mom-pump}
For $r,s\geq 0$,
\[
{\mathbb E}(\bar{Z}_n^r R_n^s)=\hat{\mathcal O}(n^{r/2}).
\]
Moreover, for $s=0$, we have the refinement:
\begin{equation}\label{first-order-asymp}
{\mathbb E}(\bar{Z}_n^r)\sim g_r\sigma_k^{r}n^{r/2},
\end{equation}
where $\sigma_k$ is a suitable constant and
\[
g_r=\begin{cases}{\displaystyle\frac{(2m)!}{2^mm!}}, &\text{if}\ r=2m;\\[5pt] 0,&\text{if}\ r=2m+1.\end{cases}
\]
\end{pro}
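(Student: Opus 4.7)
I would proceed by double induction on $(r, s)$ with respect to lexicographic order. The base case $r = 0$ is immediate: (\ref{mom-Rn}) gives $\mathbb{E}(R_n^s) = \hat{\mathcal{O}}(1) = \hat{\mathcal{O}}(n^0)$.

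For the inductive step, Lemma~\ref{obs} produces the appropriate recurrence for $a_n := \mathbb{E}(\bar{Z}_n^r R_n^s)$: the one-sided~(\ref{one-sided}) when $s > 0$ and the two-sided~(\ref{two-sided}) when $s = 0$. The toll $b_n$ is read off from (\ref{rec-mixed-moments}) after moving the self-similar contributions to the left-hand side. I would expand $T_{n,j}^{i_3}$ and $(\tilde{R}_{n-j}+1)^s$ and use the independence of $(\bar{Z}_j, R_j)$ and $(\tilde{\bar{Z}}_{n-j}, \tilde{R}_{n-j})$: each surviving summand factors as a product of two mixed moments of strictly smaller lex order, times a power of the deterministic quantity $(\Delta(n) \mid I_n = j)$. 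The inductive hypothesis bounds these factors by $\hat{\mathcal{O}}(j^{i_1/2})$ and $\hat{\mathcal{O}}((n-j)^{i_2/2})$, and (\ref{est-Delta}) controls the $\Delta$-part. Summing with weight $1/(n-1)$ yields $b_n = \hat{\mathcal{O}}(n^{r/2})$, and the crude bound $a_n = \hat{\mathcal{O}}(n^{r/2})$ then follows from Lemma~\ref{at-nprt-os} in the one-sided case and from Lemma~\ref{at-nprt-ts} in the two-sided case.

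For the refinement (the case $s = 0$), I would treat $r = 2$ first: the toll comes out as $\hat{\mathcal{O}}(\sqrt{n})$, so Lemma~\ref{at-nprt-ts}-(i) yields $\mathbb{E}(\bar{Z}_n^2) = \sigma_k^2 n + \hat{\mathcal{O}}(\sqrt{n})$, defining $\sigma_k$ via (\ref{exp-mu}). For $r \geq 3$, every term with $i_3 \geq 1$ loses at least one polynomial half-power (because $T_{n,j}$ is only logarithmic) and so contributes at order $\hat{\mathcal{O}}(n^{(r-1)/2})$; the leading behaviour comes entirely from the splits $(i_1, i_2, 0)$ with $i_1, i_2 \geq 2$. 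Inserting the refined IH $\mathbb{E}(\bar{Z}_j^{i_1}) \sim g_{i_1} \sigma_k^{i_1} j^{i_1/2}$ together with the beta asymptotic
\[
\frac{1}{n-1}\sum_{j=1}^{n-1} j^{i_1/2}(n-j)^{i_2/2} \sim B\!\left(\tfrac{i_1}{2}+1, \tfrac{i_2}{2}+1\right) n^{r/2},
\]
one finds $b_n \sim c_r \sigma_k^r n^{r/2}$ for an explicit $c_r$; Lemma~\ref{at-nprt-ts}-(ii) transfers this to $a_n \sim c_r(r+2)/(r-2)\, \sigma_k^r n^{r/2}$. Matching this with $g_r \sigma_k^r n^{r/2}$ reduces to verifying
\[
\frac{r+2}{r-2}\sum_{\substack{i_1 + i_2 = r \\ i_1, i_2 \text{ even} \\ i_1, i_2 \geq 2}} \binom{r}{i_1} g_{i_1} g_{i_2} B\!\left(\tfrac{i_1}{2}+1, \tfrac{i_2}{2}+1\right) = g_r,
\]
which after using the closed forms of $g_{2a}$ and the beta function collapses to the Gaussian moment recurrence $g_r = (r-1) g_{r-2}$. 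For odd $r$, no even-even partition of $r$ exists, the leading coefficient vanishes, and a slightly finer bookkeeping of the subleading contributions to $b_n$ delivers $\mathbb{E}(\bar{Z}_n^r) = o(n^{r/2})$, in agreement with $g_r = 0$.

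The main obstacle is this final algebraic identity: confirming that the beta-multinomial sum on the left collapses to $g_r$ after division by the transfer factor. The point that makes it tractable is that each summand $\binom{2m}{2a} g_{2a} g_{2b} B(a+1, b+1)$ (with $a + b = m$) is in fact independent of the split index $a$, reducing the identity to a multiplicity count of admissible even-even splits. A secondary subtlety lies in the odd-$r$ case, where extracting $o(n^{r/2})$ from a vanishing leading constant needs a sharper control of the subleading terms of the toll than the plain $\hat{\mathcal{O}}$-estimate used for the crude bound.
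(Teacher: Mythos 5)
Your plan matches the paper's proof essentially step for step: lexicographic double induction, the one-/two-sided split via Lemma~\ref{obs}, the crude $\hat{\mathcal O}(n^{r/2})$ bound from Lemma~\ref{at-nprt-os} and the $\Delta$-estimate, the $r=2$ base for $\sigma_k$, the beta-integral asymptotics of the convolution toll, and the transfer via Lemma~\ref{at-nprt-ts}-(ii) --- including the key observation that each summand $\binom{2m}{2a}g_{2a}g_{2m-2a}B(a+1,m-a+1)$ is independent of $a$, which is exactly how the paper evaluates the constant as $(2m)!(m-1)/(2^m(m+1)!)$. The identity then verifies $g_{2m}$ directly rather than via the recurrence $g_r=(r-1)g_{r-2}$, but this is only a difference in phrasing, not in substance.
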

\begin{proof}
We use induction on $(r,s)$, which we again equip with the lexicographic order.

First, the claims hold for $r=0$ and all $s\geq 0$ due to (\ref{mom-Rn}). Moreover, from the definition of $\bar{Z}_n$, the claims also hold for $(r,s)=(1,0)$.

Next, we assume that the claims hold for all $(r',s')$ which are smaller than $(r,s)$ with respect to the lexicographic order, i.e., either $r'<r$ or $r=r'$ and $s'<s$. In order to prove the claims for $(r,s)$, we distinguish the two cases $s>0$ and $s=0$.

If $s>0$, by Lemma~\ref{obs}, we have that ${\mathbb E}(\bar{Z}_n^r R_n^s)$ satisfies (\ref{one-sided}) with $b_n$ being a function of $(\Delta(n)\vert I_n=j)$ and mixed moments of smaller order. Thus, (\ref{est-Delta}) and the induction hypothesis yield $b_n=\hat{\mathcal O}(n^{r/2})$. From this, by  Lemma~\ref{at-nprt-os}, we obtain that ${\mathbb E}(\bar{Z}_n^r R_n^s)=\hat{\mathcal O}(n^{r/2})$. This proves the claim in this case.

If $s=0$, then again by Lemma~\ref{obs}, we have that ${\mathbb E}(\bar{Z}_n^r)$ satisfies (\ref{two-sided}) with $b_n$ given by
\begin{equation}\label{toll-s=0}
b_n=\sum_{i=1}^{r-1}\binom{r}{i}\frac{1}{n-1}\sum_{j=1}^{n-1}{\mathbb E}(\bar{Z}_j^i){\mathbb E}(\bar{Z}_{n-j}^{r-i})+\hat{\mathcal O}(n^{(r-1)/2}),
\end{equation}
where this expression arises from (\ref{rec-mixed-moments}), where we have separated the terms with $i_3=0$ (without the two main terms) and the remaining terms which satisfy the claimed bound by (\ref{est-Delta}) and the induction hypothesis. 

Now, for $r=2$, the first term on the right-hand side of (\ref{toll-s=0}) equals $0$ and thus $b_n=\hat{\mathcal O}(n^{1/2})$. Applying Lemma~\ref{at-nprt-ts}-(i) gives ${\mathbb E}(\bar{Z}_n^2)\sim \sigma_kn$, for a suitable sequence $\sigma_k$, which proves the claim in this case.

Next, for $r>2$, we plug the induction hypothesis into the first term on the right-hand side of (\ref{toll-s=0}) which yields
\begin{align*}
\sum_{i=1}^{r-1}\binom{r}{i}\frac{1}{n-1}\sum_{j=1}^{n-1}{\mathbb E}(\bar{Z}_j^i){\mathbb E}(\bar{Z}_{n-j}^{r-i})&\sim\sigma_k^{r}\sum_{i=1}^{r-1}\binom{r}{i}g_ig_{r-i}\frac{1}{n-1}\sum_{j=1}^{n-1}j^{i/2}(n-j)^{(r-i)/2}\\
&\sim\sigma_k^{r}n^{r/2}\left(\sum_{i=1}^{r-1}\binom{r}{i}g_ig_{r-i}\int_{0}^{1}x^{i/2}(1-x)^{(r-i)/2}{\rm d}x\right),
\end{align*}
where the constant inside the brackets for $r$ odd equals $0$ (since either $g_i=0$ or $g_{r-i}=0$ for $1\leq i\leq r-1$) and for $r=2m$ equals:
\begin{align*}
\sum_{i=1}^{2m-1}\binom{2m}{i}g_ig_{2m-i}\frac{\Gamma(i/2+1)\Gamma((2m-i)/2+1)}{\Gamma(m+2)}&=\sum_{i=1}^{m-1}\binom{2m}{2i}g_{2i}g_{2m-2i}\frac{i!(m-i)!}{(m+1)!}\\
&=\frac{(2m)!(m-1)}{2^m(m+1)!}.
\end{align*}
Thus, in both cases, $b_n\sim\sigma_k^{r}n^{r/2}g_r(r/2-1)/(r/2+1)$. Applying Lemma~\ref{at-nprt-ts}-(ii) gives the claimed result also in this case. 
\end{proof}

The last step is to show that $\sigma_k$ from the last proposition is strictly positive.
\begin{lmm}\label{pos-variance}
For all $k\geq 2$, we have $\sigma_k>0$.
\end{lmm}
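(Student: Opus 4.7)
The plan is to bypass an explicit calculation of $\sigma_k$ and instead establish the variance lower bound ${\rm Var}(Z_n^{(k)}) \geq c(k)\cdot n$ for some positive constant $c(k)$ and all sufficiently large $n$. Combined with the asymptotic ${\mathbb E}((\bar{Z}_n^{(k)})^2)\sim \sigma_k n$ from Proposition~\ref{mom-pump}, such a bound immediately yields $\sigma_k\geq c(k)>0$.

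For the lower bound, I would use the Doob-martingale decomposition along the sequential attachment process. Let $\xi_i$ denote the parent chosen at step $i$, so that $\xi_2,\ldots,\xi_n$ are independent with $\xi_i$ uniform on $\{1,\ldots,i-1\}$. With $\mathcal{F}_i=\sigma(\xi_2,\ldots,\xi_i)$ and $M_i={\mathbb E}(Z_n^{(k)}\mid\mathcal{F}_i)$, orthogonality of martingale differences gives
\[
{\rm Var}(Z_n^{(k)})=\sum_{i=2}^n {\mathbb E}\bigl({\rm Var}(M_i\mid\mathcal{F}_{i-1})\bigr).
\]
The crucial identity, derived from the uniform attachment rule, is that
\[
{\rm Var}(M_i\mid\mathcal{F}_{i-1})={\rm Var}_{\xi_i}\bigl(\Phi_{i,n}(D_{\xi_i,i-1})\bigr),\qquad \Phi_{i,n}(d):={\mathbb E}\bigl((d+1+B)^k-(d+B)^k\bigr),
\]
where $D_{v,i-1}$ denotes the degree of vertex $v$ in the tree after step $i-1$ and $B=\sum_{m=i+1}^n{\rm Ber}(1/(m-1))$ is a sum of independent Bernoullis. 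This identity holds because at step $i$ only the degree of the chosen parent changes, and the marginal distribution of the number of future children that any existing vertex acquires over steps $i+1,\ldots,n$ is the common law of $B$ (the same for every such vertex, by symmetry).

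It then remains to lower-bound ${\rm Var}_{\xi_i}(\Phi_{i,n}(D_{\xi_i,i-1}))$. Since $\Phi_{i,n}$ is strictly increasing and $\mathcal{T}_{i-1}$ decomposes into $L_{i-1}$ leaves (degree $1$) and $i-1-L_{i-1}$ non-leaves (degree $\geq 2$), a two-point variance inequality gives
\[
{\rm Var}_{\xi_i}\bigl(\Phi_{i,n}(D_{\xi_i,i-1})\bigr)\geq \frac{L_{i-1}(i-1-L_{i-1})}{(i-1)^2}\bigl(\Phi_{i,n}(2)-\Phi_{i,n}(1)\bigr)^2.
\]
An elementary expansion using convexity of $x\mapsto x^k$ shows $\Phi_{i,n}(2)-\Phi_{i,n}(1)\geq 3^k-2^{k+1}+1$, which is strictly positive for every $k\geq 2$, uniformly in $i$ and $n$. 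Combined with the classical facts ${\mathbb E}(L_n)=n/2$ (for $n\geq 2$) and ${\rm Var}(L_n)=O(n)$ for random non-plane recursive trees (which imply ${\mathbb E}(L_{i-1}(i-1-L_{i-1})/(i-1)^2)\to 1/4$), this yields ${\mathbb E}({\rm Var}(M_i\mid\mathcal{F}_{i-1}))\geq c(k)>0$ for all $i$ large enough; summing over $i$ gives the desired linear lower bound on ${\rm Var}(Z_n^{(k)})$.

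I expect the main obstacle to be the derivation of the identity for ${\rm Var}(M_i\mid\mathcal{F}_{i-1})$: although intuitively clear, formally showing that $f_i(j,\mathcal{T}_{i-1}):={\mathbb E}(Z_n^{(k)}\mid\mathcal{F}_{i-1},\xi_i=j)$ differs from its $\xi_i$-average by exactly $\Phi_{i,n}(D_{j,i-1})$ requires carefully tracking how the final-time degree of every vertex depends on $\xi_i$. The collapse to a single function of $D_{j,i-1}$ relies crucially on the exchangeability of the subsequent parent selections across the existing vertices that is provided by the uniform attachment rule.
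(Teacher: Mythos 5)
Your proof is correct, but it reaches the conclusion by a genuinely different route than the paper. The paper stays inside the moment-transfer framework: it identifies $\sigma_k$ via (\ref{exp-mu}) as $2\sum_{j\geq 2}b_j/(j(j+1))$ with $b_n={\mathbb E}(T_{n,I_n}^2)+4\sum_{\ell}\binom{k}{\ell}{\mathbb E}(\bar{Z}_{I_n}R_{I_n}^{\ell})$, and then shows $b_n\geq 0$ (and not identically zero) by an induction on $\ell$ establishing ${\mathbb E}(\bar{Z}_nR_n^{\ell})\geq 0$, which uses the exact solution of the one-sided recurrence together with the correlation inequality ${\mathbb E}(R^{i+j})\geq{\mathbb E}(R^i){\mathbb E}(R^j)$. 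You instead bypass the recurrence entirely and lower-bound ${\rm Var}(Z_n^{(k)})$ directly through the Doob-martingale (Efron--Stein) decomposition of the sequential attachment process; since ${\mathbb E}((\bar{Z}_n^{(k)})^2)={\rm Var}(Z_n^{(k)})$ exactly, any linear lower bound forces $\sigma_k>0$. The identity you flag as the main obstacle does hold and is less delicate than you fear: writing $Z_n^{(k)}=\sum_v D_{v,n}^k$, the terms with $v>i$ have conditional expectation given $\mathcal{F}_i$ equal to a deterministic constant (their final degrees depend only on $\xi_m$ with $m>v$), while for $v\leq i-1$ linearity of expectation means you only need the \emph{marginal} law of each vertex's future-children count, which is indeed the stated sum of independent Bernoullis; joint independence across vertices is never required, so $f_i(j,\mathcal{T}_{i-1})$ collapses to an $\mathcal{F}_{i-1}$-measurable constant plus $\Phi_{i,n}(D_{j,i-1})$ as claimed. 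The remaining steps --- the two-point variance bound via the degree-one count, the estimate $\Phi_{i,n}(2)-\Phi_{i,n}(1)\geq 3^k-2^{k+1}+1>0$ (valid because the second difference of $x^k$ is nondecreasing), and the classical facts ${\mathbb E}(L_n)=n/2$, ${\rm Var}(L_n)={\mathcal O}(n)$ --- are all sound. What your approach buys is a quantitative, explicitly positive variance lower bound that is independent of the transfer machinery and would generalize to other degree-based functionals; what it costs is reliance on the step-by-step construction and on external input about the leaf count, whereas the paper's argument is self-contained within the recurrences it has already set up and, in addition, yields the representation of $\sigma_k$ as an explicit convergent series.
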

\begin{proof}
In order to show the claim, we have to revisit the case $(r,s)=(2,0)$ in the proof of the last proposition. 

First, from (\ref{rec-mixed-moments}), we see that ${\mathbb E}(\bar{Z}_n^{2})$ satisfies (\ref{two-sided}) with 
\[
b_n={\mathbb E}(T_{n,I_n}^2)+4{\mathbb E}(\bar{Z}_{I_n}T_{n,I_n})={\mathbb E}(T_{n,I_n}^2)+4\sum_{\ell=0}^{k-1}\binom{k}{\ell}{\mathbb E}(\bar{Z}_{I_n}R_{I_n}^{\ell}).
\]
Since $\sigma_k$ is given by (\ref{exp-mu}), the claimed result follows if (i) $b_n$ is not identical to zero and (ii) $b_n\geq 0$. Item (i) is easy to check and thus, we only need to verify (ii). For this, we prove by induction on $\ell\geq 0$ that  ${\mathbb E}(\bar{Z}_n R_n^{\ell})\geq 0$. 

The claim holds for $\ell=0$. Next, from (\ref{rec-mixed-moments}), we see that ${\mathbb E}(\bar{Z}_n R_n^{\ell})$ satisfies (\ref{one-sided}) with 
\begin{equation}\label{bn-non-neg}
b_n=\sum_{i=0}^{\ell-1}\binom{\ell}{i}{\mathbb E}(\tilde{\bar{Z}}_{n-I_n}\tilde{R}_{n-I_n}^{i})+{\mathbb E}(T_{n,I_n}(\tilde{R}_{n-I_n}+1)^{\ell}).
\end{equation}
Note that the solution of (\ref{one-sided}) with $\pi_{n,j}=1/(n-1)$ is given by
\[
a_n=b_n+\sum_{j=2}^{n-1}\frac{b_j}{j}.
\]
Thus, the claim follows if again $b_n\geq 0$. The first term on the right-hand side of (\ref{bn-non-neg}) is non-negative by induction hypothesis. The second term, we rewrite as:
\[
{\mathbb E}(T_{n,I_n}(\tilde{R}_{n-I_n}+1)^{\ell})=\sum_{i=0}^{k-1}\binom{k}{i}{\mathbb E}(R_{I_n}^{i}+\tilde{R}_{n-I_n}^{i})(\tilde{R}_{n-I_n}+1)^{\ell}+\Delta(n){\mathbb E}(\tilde{R}_{n-I_n}+1)^{\ell}.
\]
Taking expectations on both sides of (\ref{rec-Zn-shift}) gives
\[
\Delta(n)=-2\sum_{i=0}^{k-1}\binom{k}{i}{\mathbb E}(R_{I_n}^i).
\]
Plugging this into the expression above and simplifying yields
\begin{align*}
{\mathbb E}(T_{n,I_n}(\tilde{R}_{n-I_n}+1)^{\ell})&=\sum_{i=0}^{k-1}\binom{k}{i}({\mathbb E}(\tilde{R}_{n-I_n}^{i}(\tilde{R}_{n-I_n}+1)^{\ell})-{\mathbb E}(R_{I_n}^i){\mathbb E}(\tilde{R}_{n-I_n}+1)^{\ell})\\
&=\sum_{i=0}^{k-1}\binom{k}{i}({\mathbb E}(R_{I_n}^{i}(R_{I_n}+1)^{\ell})-{\mathbb E}(R_{I_n}^i){\mathbb E}(R_{I_n}+1)^{\ell}),
\end{align*}
where the last step follows by symmetry. Finally, by the binomial theorem,
\[
{\mathbb E}(R_{I_n}^{i}(R_{I_n}+1)^{\ell})-{\mathbb E}(R_{I_n}^i){\mathbb E}(R_{I_n}+1)^{\ell}=\sum_{j=0}^{\ell}\binom{\ell}{j}({\mathbb E}(R_{I_n}^{i+j})-{\mathbb E}(R_{I_n}^i){\mathbb E}(R_{I_n}^j))
\]
and the expression inside the sum is non-negative because of H\"{o}lder's inequality. Thus, 
\[
{\mathbb E}(T_{n,I_n}(\tilde{R}_{n-I_n}+1)^{\ell})\geq 0,
\]
which in turn implies that the $b_n$ from (\ref{bn-non-neg}) is non-negative. This concludes the induction and the proof of the lemma.
\end{proof}

We can now complete the proof of Theorem~\ref{main-result-1}.

\begin{proof}[Proof of Theorem~\ref{main-result-1}.]
By Lemma~\ref{pos-variance} and (\ref{first-order-asymp}), we obtain that $X_n=\bar{Z}_n/(\sigma_k n^{1/2})$ satisfies
\[
{\mathbb E}(X_n^{r})\longrightarrow g_r,\qquad (n\rightarrow\infty).
\]
Since ${\mathbb E}(N(0,1))^r=g_r$, the method of moments gives $X_n\stackrel{d}{\longrightarrow}N(0,1)$ as claimed.
\end{proof}

\section{Plane Recursive Trees}\label{prt}

This section contains our main result for random plane recursive trees which is the following limit distribution result.

\begin{thm}\label{main-result-2}
For random plane recursive trees, we have the following limit distribution result for the generalized Zagreb index: for $k=3$ or $k=4$,
\[
\frac{Z_n^{(k)}}{n^{k/2}}\stackrel{d}{\longrightarrow}Z^{(k)},
\]
where $Z^{(k)}$ is a random variable that is uniquely characterized by the moment sequence $\{g_{r,0}\}_{r=0}^{\infty}$ which satisfies (\ref{grs-1}) and (\ref{grs-2}).
\end{thm}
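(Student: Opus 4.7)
The plan is to mirror the moment-transfer argument of Section~\ref{nprt}, with two modifications dictated by the plane setting. First, I would work with \emph{non-centered} mixed moments, since in this model even the mean $\mathbb{E}(Z_n^{(k)})$ is already of order $n^{k/2}$: feeding the toll of~(\ref{rec-Zn-mod}) into Lemma~\ref{at-prt-ts}(ii), the dominant $\ell=k-1$ contribution combined with $\mathbb{E}(\tilde R_{n-I_n}^{k-1})\sim c\,n^{(k-1)/2}$ from Proposition~\ref{ll-dis-Rn}(ii) produces $b_n\sim c\,n^{(k-1)/2}$, which is in the regime $\alpha>1/2$ precisely because $k\geq 3$, and gives $\mathbb{E}(Z_n^{(k)})\sim g_{1,0}\,n^{k/2}$. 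Since this is the same scale as the fluctuations we aim to capture, centering brings no gain. Second, I would replace Lemmas~\ref{at-nprt-os} and~\ref{at-nprt-ts} by Lemmas~\ref{at-prt-os} and~\ref{at-prt-ts} throughout.

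The core of the argument is an induction on $(r,s)$ in lexicographic order establishing
\[
\mathbb{E}\bigl((Z_n^{(k)})^r R_n^s\bigr)\sim g_{r,s}\,n^{(rk+s)/2},\qquad(n\to\infty),
\]
for explicit constants $g_{r,s}\geq 0$. The base row $r=0$ is covered by Proposition~\ref{ll-dis-Rn}(ii). For the inductive step, I would raise (\ref{rec-Zn-mod}) to the $r$-th power, multiply by $(\tilde R_{n-I_n}+1)^s$ via (\ref{rec-Rn}), take conditional expectation given $I_n$, and expand the multinomial. The bookkeeping of Lemma~\ref{obs} then carries over verbatim and shows that $a_n=\mathbb{E}((Z_n^{(k)})^r R_n^s)$ satisfies the one-sided recurrence (\ref{one-sided}) when $s>0$ and the two-sided recurrence (\ref{two-sided}) when $s=0$, with toll $b_n$ a polynomial in strictly earlier mixed moments. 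Inserting the induction hypothesis and evaluating the $\sum_j\pi_{n,j}(\cdot)$ sums with Lemma~\ref{dis-In}(ii) -- keeping in mind that for plane recursive trees $I_n$ typically stays bounded while $n-I_n\sim n$, so the leading toll contribution comes from configurations concentrating mass on the $n-I_n$ side of $T_{n,I_n}$ -- pinpoints the leading behaviour of $b_n$ as $c\,n^{\beta}$ with $\beta+1/2=(rk+s)/2$. Since $k\geq 3$ forces $\beta>1/2$ throughout the induction, Lemma~\ref{at-prt-os} (in the case $s>0$) or Lemma~\ref{at-prt-ts}(ii) (in the case $s=0$) applies and yields the claimed asymptotic for $a_n$. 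Reading off the Gamma-function prefactors from the transfer lemmas expresses each $g_{r,s}$ as an explicit linear combination of previously computed constants, which are precisely the announced recurrences (\ref{grs-1}) and (\ref{grs-2}).

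Specializing to $s=0$ gives $\mathbb{E}\bigl((Z_n^{(k)}/n^{k/2})^r\bigr)\to g_{r,0}$ for every $r\geq 0$, and the method of moments (Section~30 of~\cite{Bi}) finishes the argument once the sequence $\{g_{r,0}\}$ is known to determine a unique law on $[0,\infty)$. Verifying this uniqueness is the step I expect to be the main obstacle, and it is also the source of the restriction $k\in\{3,4\}$. Heuristically, since the limit $Z^{(k)}$ inherits tails of the same type as $R_\infty^k$ with $R_\infty\sim\mathrm{Rayleigh}(\sqrt{2})$, the recurrence (\ref{grs-2}) yields a growth rate $g_{r,0}\asymp\Gamma(rk/2+\mathrm{const})$, and Stirling gives $g_{r,0}^{-1/(2r)}\asymp r^{-k/4}$; the Stieltjes form of Carleman's condition $\sum_r g_{r,0}^{-1/(2r)}=\infty$ thus holds exactly when $k\leq 4$, which is the range in which determinacy can be read off directly from (\ref{grs-2}). (The value $k=2$ is excluded for a separate reason, namely the boundary case $\beta=1/2$ of Lemma~\ref{at-prt-ts}, where a different treatment is required.) Once determinacy is in hand for $k\in\{3,4\}$, Billingsley's theorem upgrades moment convergence to $Z_n^{(k)}/n^{k/2}\stackrel{d}{\longrightarrow}Z^{(k)}$, completing the proof.
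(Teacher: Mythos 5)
Your proposal follows essentially the same route as the paper: non-centered mixed moments ${\mathbb E}(Z_n^rR_n^s)\sim g_{r,s}n^{(kr+s)/2}$ established by lexicographic induction on $(r,s)$ with the plane-tree transfer lemmas (and the observation that the mass of $I_n$ concentrates on bounded values), followed by the method of moments, with the restriction to $k\in\{3,4\}$ coming from Carleman's condition exactly as you describe. The one piece you leave at a heuristic level---the growth estimate $g_{r,0}^{-1/(2r)}\asymp r^{-k/4}$---is what the paper proves rigorously in its appendix by showing $g_{r,s}\leq A^{kr+s}\sqrt{(kr+s)!}$ through a separate induction on the recurrences (\ref{grs-1}) and (\ref{grs-2}).
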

\begin{rem}\begin{itemize}\item[(i)] For $k=2$, a different normalization has to be used; see Remark~\ref{ll-k=2}.
\item[(ii)] For $k\geq 5$, the moment-transfer approach cannot be applied as the sequence $\{g_{r,0}\}_{r=0}^{\infty}$ grows too fast to imply that it uniquely characterizes $Z^{(k)}$; see Remark~\ref{no-mom}.
\end{itemize}
\end{rem}

We again compute the asymptotics of all moments by first considering the mean and then higher moments, where for the mean we bring (\ref{rec-Zn}) into the following form:
\begin{equation}\label{rec-Zn-mod-2}
Z_n^{(k)}\stackrel{d}{=}Z^{(k)}_{I_n}+\tilde{Z}^{(k)}_{n-I_n}+\sum_{\ell=0}^{k-1}\binom{k}{\ell}(R_{I_n}^{\ell}-(-1)^{k-\ell}R_n^{\ell}).
\end{equation}
Note that this differs from (\ref{rec-Zn-mod}) since we have used (\ref{rec-Rn}) to replace the last two terms of (\ref{rec-Zn}) by $-(R_n-1)^{k}+R_n^{k}$ before expanding by the binomial theorem; this change will be make the computation of the (asymptotic) mean easier. 
For higher moments we, however, again use (\ref{rec-Zn-mod}).

In addition, we state a technical lemma which is needed below.
\begin{lmm}\label{tech-est}
For $u,v\geq 0$, we have
\begin{equation}\label{est}
\sum_{j=1}^{n-1}\frac{2(n-j)C_jC_{n-j}}{nC_n}j^{u/2}(n-j)^{v/2}=\begin{cases}{\mathcal O}(n^{v/2}),&\text{if}\ u=0;\\[1pt]{\mathcal O}(n^{v/2}\log n),&\text{if}\ u=1; \\[1pt] {\mathcal O}(n^{(u+v-1)/2}),&\text{if}\ u\geq 2.\end{cases}
\end{equation}
\end{lmm}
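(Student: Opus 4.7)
The weights $2(n-j)C_jC_{n-j}/(nC_n)$ are exactly the probabilities $\pi_{n,j}=\mathbb{P}(I_n=j)$ of Lemma~\ref{dis-In}(ii). The plan is to start from the standard Catalan asymptotic $C_n\sim 4^n/(4\sqrt{\pi}\,n^{3/2})$, which via Stirling applied to $C_n=\tfrac{1}{n}\binom{2n-2}{n-1}$ upgrades to a uniform two-sided estimate $C_n\asymp 4^n/n^{3/2}$ valid for every $n\geq 1$. This gives
\[
\pi_{n,j}=\mathcal{O}\!\left(\frac{n^{1/2}}{j^{3/2}(n-j)^{1/2}}\right)
\]
uniformly in $1\leq j\leq n-1$, so that the left-hand side of (\ref{est}) is $\mathcal{O}(n^{1/2}\,T_n(u,v))$ where
\[
T_n(u,v):=\sum_{j=1}^{n-1}j^{(u-3)/2}(n-j)^{(v-1)/2}.
\]

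The next step is to split $T_n(u,v)$ at $j=\lfloor n/2\rfloor$. On the lower range $1\leq j\leq n/2$, I would bound $(n-j)^{(v-1)/2}$ by $\mathcal{O}(n^{(v-1)/2})$ (which is where the hypothesis $v\geq 0$ enters, to dominate $(n-j)^{(v-1)/2}$ by its value at $j\asymp n$) and then evaluate $\sum_{j=1}^{\lfloor n/2\rfloor}j^{(u-3)/2}$ by elementary means: it is $\mathcal{O}(1)$ for $u=0$ (convergent tail of $j^{-3/2}$), $\mathcal{O}(\log n)$ for $u=1$ (harmonic sum), and $\mathcal{O}(n^{(u-1)/2})$ for $u\geq 2$ (integral comparison, using $(u-3)/2>-1$). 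On the upper range $n/2\leq j\leq n-1$, use $j^{(u-3)/2}=\mathcal{O}(n^{(u-3)/2})$ together with $\sum_{j=n/2}^{n-1}(n-j)^{(v-1)/2}=\mathcal{O}(n^{(v+1)/2})$ (valid for every $v\geq 0$).

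Putting the two ranges together case by case, one checks that the lower-range contribution always dominates: $T_n(u,v)=\mathcal{O}(n^{(v-1)/2})$ when $u=0$, $\mathcal{O}(n^{(v-1)/2}\log n)$ when $u=1$, and $\mathcal{O}(n^{(u+v-2)/2})$ when $u\geq 2$. Multiplying each by the outer $n^{1/2}$ yields precisely the three bounds asserted in (\ref{est}). The only point that requires a little care is the uniformity of the Catalan estimate down to the boundary indices $j=1$ and $j=n-1$, but this is a routine consequence of explicit Stirling bounds for binomial coefficients, so I do not expect a genuine obstacle.
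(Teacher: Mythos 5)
Your proposal is correct, and the reduction step is the same as the paper's: both start from the Catalan asymptotics $C_n\sim 4^{n-1}/\sqrt{\pi n^3}$ (made uniform over all indices) to bound the weights by ${\mathcal O}\bigl(n^{1/2}j^{-3/2}(n-j)^{-1/2}\bigr)$, reducing everything to the sum $\sum_{j}j^{(u-3)/2}(n-j)^{(v-1)/2}$. Where you diverge is in how this sum is estimated. The paper treats it as a Riemann sum and, for $u\geq 2$, identifies it with a convergent Beta integral $\int_0^1x^{(u-3)/2}(1-x)^{(v-1)/2}\,{\rm d}x$; for $u\in\{0,1\}$, where that integral diverges at $x=0$, it regularizes by writing $(n-j)^{(v-1)/2}=\bigl((n-j)^{(v-1)/2}-n^{(v-1)/2}\bigr)+n^{(v-1)/2}$, so that the first piece yields a now-convergent integral and the second piece produces the ${\mathcal O}(1)$ or ${\mathcal O}(\log n)$ factor. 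You instead split the sum at $j=\lfloor n/2\rfloor$ and bound each half by elementary comparison. Your route is arguably the more robust one: it handles all three cases by a single mechanism, avoids any discussion of convergence of improper integrals, and sidesteps the subtraction trick entirely; the paper's route has the (here unused) advantage of exhibiting the leading constant as a Beta value when $u\geq 2$. One small imprecision: for $u\geq 2$ the upper range $n/2\leq j\leq n-1$ contributes ${\mathcal O}(n^{(u+v-2)/2})$ as well, so the lower range does not strictly dominate there --- the two halves are of the same order --- but since both meet the claimed bound this does not affect the conclusion. Your closing caveat about uniformity of the Stirling bounds at $j=1$ and $j=n-1$ is the right thing to flag and is indeed routine (the two-sided bound $C_n\asymp 4^n n^{-3/2}$ holds for every $n\geq 1$ with absolute constants); note that the paper glosses over this point entirely.
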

\begin{proof}
First recall the following asymptotic result for the Catalan numbers:
\begin{equation}\label{asymp-Cn}
C_n\sim\frac{4^{n-1}}{\sqrt{\pi n^3}},\qquad (n\rightarrow\infty).
\end{equation}
By plugging this into (\ref{est}), we obtain for $u\geq 2$,
\begin{align*}
\sum_{j=1}^{n-1}\frac{2(n-j)C_jC_{n-j}}{nC_n}j^{u/2}(n-j)^{v/2}&={\mathcal O}\left(n^{1/2}\sum_{j=1}^{n-1}j^{(u-3)/2}(n-j)^{(v-1)/2}\right)\\
&={\mathcal O}\left(n^{(u+v-1)/2}\int_{0}^{1}x^{(u-3)/2}(1-x)^{(v-1)/2}{\rm d}x\right)
\end{align*}
and the result follows from this since the last integral is finite. On the other hand, for $u=0$ and $u=1$, the integral does not exist and one has to be more careful. More precisely, in these two cases, we replace the sum after the first step by:
\[
\sum_{j=1}^{n-1}j^{(u-3)/2}\left((n-j)^{(v-1)/2}-n^{(v-1)/2}\right)+n^{(v-1)/2}\sum_{j=1}^{n-1}j^{(u-3)/2}.
\]
Then, the divergence issue is resolved and we can argue as above:
\begin{align*}
n^{1/2}\sum_{j=1}^{n-1}&j^{(u-3)/2}(n-j)^{(v-1)/2}\\
&={\mathcal O}\left(n^{(u+v-1)/2}\int_{0}^{1}x^{(u-3)/2}\left((1-x)^{(v-1)/2}-1\right){\rm d}x+n^{v/2}\sum_{j=1}^{n-1}j^{(u-3)/2}\right)\\[5pt]
&=\begin{cases}{\mathcal O}(n^{v/2}),&\text{if}\ u=0;\\[1pt]{\mathcal O}(n^{v/2}\log n),&\text{if}\ u=1.\end{cases}
\end{align*}
This proves the claimed result.
\end{proof}

\paragraph{Mean Value.} Taking the mean value on both sides of (\ref{rec-Zn-mod-2}) shows that $a_n={\mathbb E}(Z_n^{(k)})$ satisfies (\ref{two-sided}) with
\[
b_n=\sum_{\ell=0}^{k-1}\binom{k}{\ell}\left(\sum_{j=1}^{n-1}\frac{2(n-j)C_jC_{n-j}}{nC_n}{\mathbb E}(R_j^{\ell})-(-1)^{k-\ell}{\mathbb E}(R_n^{\ell})\right).
\]
Next, from the moment convergence in Proposition~\ref{ll-dis-Rn}-(ii), we obtain that
\begin{equation}\label{asymp-Rnm}
{\mathbb E}(R_n^{m})\sim\frac{m!\sqrt{\pi}}{\Gamma((m+1)/2)}n^{m/2},\qquad (m\geq 0),  
\end{equation}
and in particular, ${\mathbb E}(R_n^{m})={\mathcal O}(n^{m/2})$ for $m\geq 0$. Thus, by (\ref{est}) (with $u=\ell$ and $v=0$):
\[
b_n\sim k{\mathbb E}(R_n^{k-1})\sim\frac{k!\sqrt{\pi}}{\Gamma(k/2)}n^{(k-1)/2}
\]
and consequently, by Lemma~\ref{at-prt-ts},
\begin{equation}\label{asymp-mean-prt}
{\mathbb E}(Z_n^{(k)})\sim\begin{cases} 2n\log n,&\text{if}\ k=2;\\[5pt]{\displaystyle\frac{2k!\sqrt{\pi}}{(k-2)\Gamma((k-1)/2)}n^{k/2}},&\text{if}\ k\geq 3.\end{cases}
\end{equation}
\begin{rem}
This difference between $k=2$ and $k\geq 3$ is the reason why Theorem~\ref{main-result-2} does not include the case $k=2$. 
\end{rem}

For small $k$, one can also compute closed-form expressions. We demonstrate this for $k=2$; the computation for higher values of $k$ is similar but gets more and more involved. 

First, from (\ref{rec-Rn}) one obtains that
\[
{\mathbb E}(R_n)=\sum_{j=1}^{n-1}\frac{2(n-j)C_jC_{n-j}}{nC_n}{\mathbb E}(R_{n-j})+1,\qquad (n\geq 2),
\]
where ${\mathbb E}(R_1)=0$. This is equivalent to
\[
nC_n{\mathbb E}(R_n)=2\sum_{j=1}^{n-1}C_j(n-j)C_{n-j}{\mathbb E}(R_{n-j})+nC_n.
\]
Set 
\[
A(z):=\sum_{n\geq 1}nC_n{\mathbb E}(R_n)z^n.
\]
Then,
\[
A(z)=2\left(\sum_{n\geq 1}C_nz^n\right)A(z)+\sum_{n\geq 2}nC_nz^n.
\]
Note that
\[
\sum_{n\geq 1}C_nz^n=\frac{1-\sqrt{1-4z}}{2}\qquad\text{and}\qquad\sum_{n\geq 2}nC_nz^n=\frac{z}{\sqrt{1-4z}}-z.
\]
Consequently,
\[
A(z)=\frac{z}{1-4z}-\frac{z}{\sqrt{1-4z}}
\]
and thus,
\begin{equation}\label{closed-Rn}
{\mathbb E}(R_n)=\frac{[z^n]A(z)}{nC_n}=\frac{4^{n-1}}{nC_n}-1,
\end{equation}
where the last step follows from standard computations. (Here, $[z^n]f(z)$ denotes the $n$-th coefficient in the Maclaurin series of $f(z)$.) 

Now, let us turn to ${\mathbb E}(Z_n^{(2)})$ which due to (\ref{rec-Zn-mod}) satisfies
\[
{\mathbb E}(Z_n^{(2)})=2\sum_{j=1}^{n-1}\frac{C_jC_{n-j}}{C_n}{\mathbb E}(Z_j^{(2)})+2+4\sum_{j=1}^{n-1}\frac{C_jC_{n-j}}{C_n}{\mathbb E}(R_j),\qquad (n\geq 2)
\]
with ${\mathbb E}(Z_1^{(2)})=0$. This can be rewritten as
\[
C_n{\mathbb E}(Z_n^{(2)})=2\sum_{j=1}^{n-1}C_{n-j}C_j{\mathbb E}(Z_j^{(2)})+2C_n+4\sum_{j=1}^{n-1}C_{n-j}C_j{\mathbb E}(R_j)
\]
which by setting
\[
B(z):=\sum_{n\geq 1}C_n{\mathbb E}(Z_n^{(2)})z^n
\]
translates into
\[
B(z)=2\left(\sum_{n\geq 1}C_n z^n\right)B(z)+2\sum_{n\geq 2}C_nz^n+4\left(\sum_{n\geq 1}C_nz^n\right)\sum_{n\geq 1}C_n{\mathbb E}(R_n)z^n.
\]
From (\ref{closed-Rn}), we have
\[
\sum_{n\geq 1}C_n{\mathbb E}(R_n)z^n=\frac{1}{4}\log\frac{1}{1-4z}-\frac{1-\sqrt{1-4z}}{2}.
\]
Collecting everything and simplifying gives:
\[
B(z)=\frac{1}{2\sqrt{1-4z}}\log\frac{1}{1-4z}-\frac{2z}{\sqrt{1-4z}}-\frac{1}{2}\log\frac{1}{1-4z}+1-\sqrt{1-4z}.
\]
Recall that
\[
[z^n]\frac{1}{\sqrt{1-z}}\log\frac{1}{1-z}=\frac{1}{4^n}\binom{2n}{n}(2H_{2n}-H_n),
\]
where $H_n=\sum_{j=1}^{n}(1/j)$ denotes the $n$-th Harmonic number. Consequently,
\[
{\mathbb E}(Z_n^{(2)})=(2n-1)(2H_{2n}-H_n)-2n-\frac{4^n}{2nC_n}+2
\]
From this, we obtain the refined asymptotics:
\begin{equation}\label{exp-mean-2}
{\mathbb E}(Z_n^{(2)})=2n\log n+(4\log 2+2\gamma-2)n+{\mathcal O}(\sqrt{n}),\qquad (n\rightarrow\infty),
\end{equation}
where $\gamma$ denotes Euler's constant.

We next consider higher moments.

\paragraph{Higher Moments.} Here, in contrast to the non-plane case, we do not need to shift by the mean value, which simplifies the proof. We drop from now on the superscript $k$.

We start with an analogue of Lemma~\ref{obs}.

\begin{lmm}\label{obs-2}
The mixed moments ${\mathbb E}(Z_n^{r}R_n^{s})$ for $s>0$ satisfy (\ref{one-sided}) and for $s=0$ satisfy (\ref{two-sided}), where $b_n$ is in both cases a function of mixed moment of smaller order with respect to the lexicographic order. 
\end{lmm}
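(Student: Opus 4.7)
The plan is to follow the same template as the proof of Lemma~\ref{obs}, which in fact becomes slightly simpler here because we work directly with the uncentered $Z_n^{(k)}$ and therefore no $\Delta(n)$-type correction enters the recurrence.

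First I would raise (\ref{rec-Zn-mod}) to the $r$-th power, multiply by $R_n^s$, use (\ref{rec-Rn}) to replace $R_n$ by $\tilde{R}_{n-I_n}+1$, and take expectations. Conditioning on $I_n=j$ introduces the weights $\pi_{n,j}=2(n-j)C_jC_{n-j}/(nC_n)$ from Lemma~\ref{dis-In}(ii). With
\[
T_{n,j}:=\sum_{\ell=0}^{k-1}\binom{k}{\ell}\bigl(R_j^{\ell}+\tilde{R}_{n-j}^{\ell}\bigr),
\]
the multinomial theorem gives
\[
{\mathbb E}(Z_n^r R_n^s)=\sum_{j=1}^{n-1}\pi_{n,j}\sum_{i_1+i_2+i_3=r}\binom{r}{i_1,i_2,i_3}{\mathbb E}\bigl(Z_j^{i_1}\tilde{Z}_{n-j}^{i_2}T_{n,j}^{i_3}(\tilde{R}_{n-j}+1)^s\bigr),
\]
and by independence each summand factors into a product of the form ${\mathbb E}(Z_j^{i_1}R_j^{i}){\mathbb E}(Z_{n-j}^{i_2}R_{n-j}^{\ell})$ with $i\leq i_3(k-1)$ and $\ell\leq i_3(k-1)+s$.

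Next I would identify which summands carry mixed-moment order exactly $(r,s)$ under the lexicographic order. Taking $i_2=r$ (which forces $i_1=i_3=0$ and hence $i=0$) together with selecting the $\tilde{R}_{n-j}^{s}$ monomial from the binomial expansion of $(\tilde{R}_{n-j}+1)^s$ yields ${\mathbb E}(Z_{n-j}^{r}R_{n-j}^{s})$ with coefficient~$1$. For $s>0$ this is the unique highest-order contribution, so moving it to the left-hand side produces a one-sided recurrence of the form (\ref{one-sided}) with $b_n$ collecting only strictly smaller mixed moments. For $s=0$, the choice $i_1=r$ (which forces $i_2=i_3=0$ and hence $i=\ell=0$) gives an additional highest-order term ${\mathbb E}(Z_{j}^{r})$ with coefficient~$1$; subtracting both highest-order contributions then produces a two-sided recurrence of the form (\ref{two-sided}).

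There is essentially no technical obstacle: the argument is a bookkeeping exercise over the multinomial expansion, and the specific form of the weights $\pi_{n,j}$ plays no role in deciding which summands are dominant. Thus the proof is structurally identical to that of Lemma~\ref{obs}, but cleaner because the mean-shift terms are absent.
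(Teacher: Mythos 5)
Your argument reproduces the paper's proof: both raise the distributional recurrence to the $r$-th power, introduce the same $T_{n,j}$, factor by independence, and isolate the highest-order terms ${\mathbb E}(Z_{n-j}^{r}R_{n-j}^{s})$ (and, when $s=0$, also ${\mathbb E}(Z_{j}^{r})$), each with coefficient $1$ and weight $\pi_{n,j}$, to land in form (\ref{one-sided}) or (\ref{two-sided}) respectively. The paper simply says the argument is identical to Lemma~\ref{obs} after writing the analogue of (\ref{rec-mixed-moments}) with the plane-tree weights, and your observation that the absence of the $\Delta(n)$ correction makes this slightly cleaner is accurate.
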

\begin{proof}
Similar as in the proof of Lemma~\ref{obs}, we obtain that
\begin{equation}\label{EZrRs}
{\mathbb E}(Z_n^rR_n^s)=\sum_{j=1}^{n-1}\frac{2(n-j)C_jC_{n-j}}{nC_n}\sum_{i_1+i_2+i_3=r}\binom{r}{i_1,i_2,i_3}{\mathbb E}(Z_j^{i_1}\tilde{Z}_{n-j}^{i_2}T_{n,j}^{i_3}(\tilde{R}_{n-j}+1)^s),
\end{equation}
where
\[
T_{n,j}:=\sum_{\ell=0}^{k-1}\binom{k}{\ell}(R_j^{\ell}+\tilde{R}_{n-j}^{\ell}).
\]
The rest of the arguments are identical to those in the proof of Lemma~\ref{obs}.
\end{proof}

From this result and induction, we obtain the following.

\begin{pro}\label{mixed-moments}
For $r,s\geq 0$, we have
\begin{equation}\label{exp-ZrRs}
{\mathbb E}(Z_n^rR_n^s)\sim g_{r,s}n^{(kr+s)/2}
\end{equation}
where $g_{r,s}$ satisfies the recurrence in the proof below.
\end{pro}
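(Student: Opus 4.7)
The plan is to prove (\ref{exp-ZrRs}) by induction on $(r,s)$ ordered lexicographically, reading off the promised recurrences for $g_{r,s}$ along the way. The base case $r=0$ with all $s\geq 0$ follows immediately from (\ref{asymp-Rnm}) with $g_{0,s}=s!\sqrt{\pi}/\Gamma((s+1)/2)$, and the case $(r,s)=(1,0)$ is supplied by the mean-value asymptotics (\ref{asymp-mean-prt}).

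For the inductive step, Lemma~\ref{obs-2} puts ${\mathbb E}(Z_n^rR_n^s)$ into the one-sided form (\ref{one-sided}) when $s>0$ and the two-sided form (\ref{two-sided}) when $s=0$, with $b_n$ a finite sum of products ${\mathbb E}(Z_j^{i_1}R_j^{i}){\mathbb E}(\tilde Z_{n-j}^{i_2}\tilde R_{n-j}^{\ell})$ of strictly smaller lex order, weighted against $\pi_{n,j}=2(n-j)C_jC_{n-j}/(nC_n)$. Substituting the induction hypothesis
\[
{\mathbb E}(Z_j^{i_1}R_j^{i}){\mathbb E}(\tilde Z_{n-j}^{i_2}\tilde R_{n-j}^{\ell})\sim g_{i_1,i}g_{i_2,\ell}\,j^{(ki_1+i)/2}(n-j)^{(ki_2+\ell)/2}
\]
and evaluating the $j$-sum via Lemma~\ref{tech-est} with $u=ki_1+i$ and $v=ki_2+\ell$, I would then identify the dominant contributions. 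Under the constraints $i_1+i_2+i_3=r$, $i\leq i_3(k-1)$, and $\ell\leq i_3(k-1)+s$, the exponent $(u+v-1)/2$ (in the $u\geq 2$ branch) and $v/2$ (in the $u=0$ branch) are both maximized over the non-main tuples at the common value $(kr+s-1)/2$. Two families realize this maximum: the ``correction'' tuples with $i_3\geq 1$ that use the top contribution $\binom{k}{k-1}\tilde R_{n-j}^{k-1}$ from each factor of $T_{n,j}$ combined with the $\tilde R_{n-j}^s$ term of $(\tilde R_{n-j}+1)^s$ (which land in the $u=0$ branch), and---only when $s=0$---the ``splitting'' tuples $(i_1,i_2,0)$ with $i_1,i_2\geq 1$ (which land in the $u,v\geq 2$ branch and produce Beta-function factors $B((u-1)/2,(v+1)/2)$). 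Collecting coefficients gives $b_n\sim c_{r,s}n^{(kr+s-1)/2}$, with $c_{r,s}$ an explicit linear combination of the quantities $g_{i_1,i}g_{i_2,\ell}$ already known by induction.

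Applying the transfer lemmas then closes the step. For $s>0$, Lemma~\ref{at-prt-os} with $\alpha=(kr+s-1)/2$ yields the recurrence
\[
g_{r,s}=c_{r,s}\,\frac{\Gamma((kr+s)/2)}{\Gamma((kr+s+1)/2)},
\]
which is (\ref{grs-1}). For $s=0$ with $r\geq 1$, we instead have $\alpha=(kr-1)/2$; since $k\in\{3,4\}$ and $r\geq 1$ we always have $\alpha>1/2$, so Lemma~\ref{at-prt-ts}-(ii) produces
\[
g_{r,0}=c_{r,0}\,\frac{\Gamma((kr-2)/2)}{\Gamma((kr-1)/2)},
\]
which is (\ref{grs-2}).

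The main obstacle is the bookkeeping for $c_{r,s}$: one must enumerate all admissible tuples $(i_1,i_2,i_3,i,\ell)$, determine which of them actually attain the leading exponent $(kr+s-1)/2$, and collect the accompanying multinomial and binomial factors together with the Beta integrals produced by Lemma~\ref{tech-est}. One must also verify that no admissible tuple lands in the $u=1$ branch of that lemma at leading order, since this would introduce a stray $\log n$ factor incompatible with (\ref{exp-ZrRs}). These verifications are combinatorial rather than conceptual, but must be carried out uniformly in $(r,s)$ in order to obtain a clean closed-form expression for $c_{r,s}$.
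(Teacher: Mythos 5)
Your overall strategy coincides with the paper's: induction on $(r,s)$ in lexicographic order, the same base cases, Lemma~\ref{obs-2} to set up the one-/two-sided recurrences, Lemma~\ref{tech-est} to evaluate the convolution sums, and the transfer lemmas (your citation of Lemma~\ref{at-prt-os} for $s>0$ is in fact the correct one) to close the step. However, your identification of the tuples that dominate $b_n$ is wrong, and since the whole content of the proposition is the recurrence (\ref{grs-1})--(\ref{grs-2}) that these tuples produce, this is a genuine gap rather than deferred bookkeeping. Concretely: (a) the splitting tuples $(i_1,i_2,i_3)=(\ell,r-\ell,0)$ with $\ell\geq 1$ contribute at the leading order $n^{(kr+s-1)/2}$ for \emph{every} $s\geq 0$, not only for $s=0$ --- they land in the $u=k\ell\geq 2$ branch of Lemma~\ref{tech-est} with $v=k(r-\ell)+s$ and give exactly the first sum in (\ref{grs-1}); omitting them for $s>0$ yields a wrong recurrence, which then also corrupts $g_{r,0}$ for larger $r$ through the term $g_{r-1,k-1}$ in (\ref{grs-2}). (b) You miss the family $i_2=r$, $i_3=0$ with the power $s-1$ taken from $(\tilde R_{n-j}+1)^s$, which contributes the term $s\,g_{r,s-1}$ in (\ref{grs-1}). (c) Your ``correction'' family is too large: each extra factor of $T_{n,j}$ (i.e.\ each unit increase of $i_3$) costs $k/2$ in the exponent coming from $Z$ while gaining at most $(k-1)/2$ from the extra power of $R$, a net loss of $1/2$; hence only $i_3=1$ (with $i_2=r-1$, $\ell_2=k-1$, $\ell_3=s$) survives at order $n^{(kr+s-1)/2}$, giving the single term $kr\,g_{r-1,k+s-1}$, whereas tuples with $i_3\geq 2$ are $\mathcal{O}(n^{(kr+s)/2-1})$.

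Your remaining concern about the $u=1$ branch of Lemma~\ref{tech-est} is legitimate but easily dispatched: $u=1$ forces $i_1=0$ and $\ell_1=1$, hence $i_3\geq 1$, so $v\leq k(r-1)+(k-2)+s=kr+s-2$ and the resulting $\mathcal{O}(n^{(kr+s-2)/2}\log n)$ is negligible against $n^{(kr+s-1)/2}$. Once the three dominant families above are correctly enumerated and their constants computed (the splitting family via the Beta integral, the other two via $\sum_{j\geq1}C_j4^{-j}=1/2$), the transfer lemmas you invoke do give (\ref{grs-1}) and (\ref{grs-2}) exactly as in the paper.
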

\begin{proof}
We use induction on $(r,s)$ with respect to the lexicographic order, where the claim holds for $r=0$ and arbitrary $s$ because of (\ref{asymp-Rnm}) and for $(r,s)=(1,0)$ because of (\ref{asymp-mean-prt}).

We next assume that the claim holds for all $(r',s')$ which are lexicographically smaller than $(r,s)$. We want to prove it for $(r,s)$.

First consider $s>0$. Then, by Lemma~\ref{obs-2}, we know that ${\mathbb E}(Z_n^{r}R_n^{s})$ satisfies (\ref{one-sided}), where $b_n$ consists of the terms on the right-hand side of (\ref{EZrRs}) except the term with $i_2=r$ and $\tilde{R}_{n-j}^s$ in the expansion of $(\tilde{R}_{n-j}+1)^s$. By expanding, we see that these terms are a linear combination of terms
\begin{equation}\label{terms-bn}
{\mathbb E}(Z_j^{i_1}R_j^{\ell_1}){\mathbb E}(Z_{n-j}^{i_2}R_{n-j}^{\ell_2+\ell_3}),
\end{equation}
where $i_1+i_2+i_3=r$, $\ell_1$ and $\ell_2$ arise from expanding $T_{n,j}^{i_3}$ and thus $\ell_1+\ell_2\leq i_3(k-1)$, and $\ell_3$ arises from expanding $(\tilde{R}_{n-j}+1)^s$ and thus $\ell_3\leq s$. We plug the induction hypothesis into this and then apply Lemma~\ref{tech-est}. Note that whenever $i_1+i_2$ is reduced by $1$, we lose a factor of $k$ in front of $r$ in the exponent of (\ref{exp-ZrRs}). On the other hand, from $\ell_1+\ell_2\leq i_3(k-1)$, we see that we can gain at most a term $k-1$. Thus, by a careful analysis of the terms (\ref{terms-bn}), we see that the following terms dominate:
\begin{itemize}
\item[(i)] $i_1+i_2=r$ with $i_2<r$. Then, $i_3=0$ and thus $\ell_1=\ell_2=0$. Moreover, $\ell_3=s$.
\item[(ii)] $i_2=r$ and thus $i_1=i_3=0$ which in turn implies that $\ell_1=\ell_2=0$. Moreover, $\ell_3=s-1$. (Recall that $\ell_3=s$ is not allowed as this term was removed from $b_n$.)
\item[(iii)] $i_2=r-1$ and $i_1=0, \ell_1=0$ which implies that $\ell_2=k-1$ and $\ell_3=s$.
\end{itemize}
In all remaining cases, the term (\ref{terms-bn}) contributes ${\mathcal O}(n^{(kr+s)/2-1})$. Thus, up to this error, $b_n$ is given by:
\[
\sum_{j=1}^{n-1}\frac{2(n-j)C_jC_{n-j}}{nC_n}\left(\!\sum_{\ell=1}^{r}\binom{r}{\ell}{\mathbb E}(Z_j^{\ell}){\mathbb E}(Z_{n-j}^{r-\ell}R_{n-j}^s)+kr{\mathbb E}(Z_{n-j}^{r-1}R_{n-j}^{k+s-1})+s{\mathbb E}(Z_{n-j}^rR_{n-j}^{s-1})\!\right).
\]
We consider the three terms in this expression separately. For the first one, by (\ref{asymp-Cn}) and the induction hypothesis:
\begin{align*}
\sum_{\ell=1}^{r}\binom{r}{\ell}\sum_{j=1}^{n-1}&\frac{2(n-j)C_jC_{n-j}}{nC_n}{\mathbb E}(Z_j^{\ell}){\mathbb E}(Z_{n-j}^{r-\ell}R_{n-j}^s)\\
&\sim\sum_{\ell=1}^{r}\binom{r}{\ell}g_{\ell,0}g_{r-\ell,s}\frac{\sqrt{n}}{2\sqrt{\pi}}\sum_{j=1}^{n-1}j^{(k\ell-3)/2}
(n-j)^{(k(r-\ell)+s-1)/2}\\[5pt]
&\sim\frac{n^{(kr+s-1)/2}}{2\sqrt{\pi}}\sum_{\ell=1}^{r}\binom{r}{\ell}g_{\ell,0}g_{r-\ell,s}\int_{0}^{1}x^{(k\ell-3)/2}(1-x)^{(k(r-\ell)+s-1)/2}
{\rm d}x\\[5pt]
&=\frac{n^{(kr+s-1)/2}}{2\sqrt{\pi}\Gamma((kr+s)/2)}\sum_{\ell=1}^{r}\binom{r}{\ell}g_{\ell,0}g_{r-\ell,s}\Gamma\left(\frac{k\ell-1}{2}\right)
\Gamma\left(\frac{k(r-\ell)+s+1}{2}\right).
\end{align*}
Next, consider the second term
\begin{align*}
kr\sum_{j=1}^{n-1}\frac{2(n-j)C_jC_{n-j}}{nC_n}{\mathbb E}(Z_{n-j}^{r-1}R_{n-j}^{k+s-1})&\sim 2krg_{r-1,k+s-1}\sqrt{n}\sum_{j=1}^{n-1}C_j4^{-j}(n-j)^{(kr+s)/2-1}\\[4pt]
&\sim2krg_{r-1,k+s-1}n^{(kr+s-1)/2}\sum_{j\geq 1}C_j4^{-j}\\[4pt]
&\sim krg_{r-1,k+s-1}n^{(kr+s-1)/2},
\end{align*}
where the last step follows from 
\[
\sum_{j\geq 1}C_j4^{-j}=\frac{1}{2}.
\]
Likewise,
\begin{align*}
s\sum_{j=1}^{n-1}\frac{2(n-j)C_jC_{n-j}}{nC_n}{\mathbb E}(Z_{n-j}^{r}R_{n-j}^{s-1})&\sim 2sg_{r,s-1}n^{(kr+s-1)/2}\sum_{j\geq 1}C_j4^{-j}\\
&\sim sg_{r,s-1}n^{(kr+s-1)/2}.
\end{align*}
Using Lemma~\ref{at-nprt-ts}-(ii) gives now the claimed result with
\begin{align}
g_{r,s}=&\frac{1}{2\sqrt{\pi}\Gamma((kr+s+1)/2)}\!\sum_{\ell=1}^{r}\!\binom{r}{\ell}g_{\ell,0}g_{r-\ell,s}\Gamma\!\left(\frac{k\ell-1}{2}\!\right)\!\Gamma\!\left(\frac{k(r-\ell)+s+1}{2}\!\right)\nonumber\\
&\quad+\frac{\Gamma((kr+s)/2)}{\Gamma((kr+s+1)/2)}\left(krg_{r-1,k+s-1}+sg_{r,s-1}\right),\qquad (s>0).\label{grs-1}
\end{align}

Finally, for $s=0$, the arguments are similar. First, by Lemma~\ref{obs-2}, ${\mathbb E}(Z_n^{r}R_n^{s})$ satisfies (\ref{two-sided}) with
\[
b_n=\sum_{j=1}^{n-1}\frac{2(n-j)C_jC_{n-j}}{nC_n}\left(\!\sum_{\ell=1}^{r-1}\binom{r}{\ell}{\mathbb E}(Z_j^{\ell}){\mathbb E}(Z_{n-j}^{r-\ell})+kr{\mathbb E}(Z_{n-j}^{r-1}R_{n-j}^{k-1})\!\right)+{\mathcal O}(n^{(kr+s)/2-1}).
\]
Thus, arguing as above, we obtain the claim with
\begin{align}
g_{r,0}=&\frac{1}{\sqrt{\pi}(kr-2)\Gamma((kr-1)/2)}\sum_{\ell=1}^{r-1}\binom{r}{\ell}g_{\ell,0}g_{r-\ell,0}\Gamma
\left(\frac{k\ell-1}{2}\right)\Gamma\left(\frac{k(r-\ell)+1}{2}\right)\nonumber\\
&\qquad+\frac{kr\Gamma(kr/2-1)}{\Gamma((kr-1)/2)}g_{r-1,k-1}.\label{grs-2}
\end{align}
This concludes the proof.
\end{proof}

Before we prove the main result of this section, we still need another technical lemma.

\begin{lmm}\label{unique}
For $k=3$ or $k=4$, there is a unique random vector $(Z,R)$ whose $(r,s)$-th mixed moment equals $g_{r,s}$.
\end{lmm}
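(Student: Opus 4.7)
The plan is to apply a Carleman-type multivariate moment-determinacy criterion: the joint moment sequence $\{g_{r,s}\}$ uniquely determines a random vector $(Z,R)$ provided the two marginal sequences $\{g_{r,0}\}_{r\geq 0}$ and $\{g_{0,s}\}_{s\geq 0}$ each satisfy the classical Carleman condition, as this reduces the multivariate problem to univariate ones (see, e.g., Petersen's theorem on multivariate moment problems). The $R$-marginal is handled immediately: by Proposition~\ref{ll-dis-Rn}-(ii), the limit is Rayleigh, so $g_{0,s}=O(\Gamma(s/2+1))$, which trivially satisfies Carleman.

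The $Z$-marginal requires a growth estimate, and the plan is to establish, by joint induction on $(r,s)$ with respect to the lexicographic order, an upper bound of the form
\[
g_{r,s}\leq C^{r+s}\,(r!)^{k/2-1}\,\Gamma(s/2+1)
\]
for a suitable constant $C=C(k)>0$, using the recurrences (\ref{grs-1}) and (\ref{grs-2}). The inductive step amounts to substituting this bound into the convolutions on the right-hand side of each recurrence, invoking standard Beta-function asymptotics to evaluate
\[
\sum_{\ell=1}^{r-1}\binom{r}{\ell}\,\Gamma\!\left(\tfrac{k\ell-1}{2}\right)\Gamma\!\left(\tfrac{k(r-\ell)+1}{2}\right),
\]
and using Stirling for the $1/\Gamma((kr-1)/2)$ prefactor to verify that the resulting right-hand side is bounded by the inductive hypothesis rather than exceeding it.

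Once this bound is in place, another application of Stirling yields $g_{2r,0}^{1/(2r)}=O(r^{k/2-1})$, so Carleman's condition $\sum_{r}g_{2r,0}^{-1/(2r)}=\infty$ is satisfied precisely when $k/2-1\leq 1$, i.e., when $k\in\{3,4\}$. For $k\geq 5$ the same calculation shows that the condition fails, which is consistent with and indeed the source of Remark~\ref{no-mom}. Combining the Carleman conditions for the two marginals via Petersen's criterion then yields joint determinacy, proving the lemma.

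The main technical obstacle is pinpointing the correct exponent $k/2-1$ in the inductive bound and verifying that the recurrences propagate rather than inflate it. This is forced by the delicate balance between the Stirling estimate of $1/\Gamma((kr-1)/2)$ and the product of Gamma-factors arising from the Beta-integral estimate of the convolution; in fact the exponent is expected to be tight (up to the constant $C$), which is precisely why the argument works only for $k\leq 4$. Once the correct form of the hypothesis is identified, the verification itself is elementary but tedious bookkeeping with Stirling's formula.
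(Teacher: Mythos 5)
Your overall strategy---reducing joint moment-determinacy to Carleman conditions on the two marginals, handling the $R$-marginal by identifying it as Rayleigh, and proving a growth bound on $g_{r,s}$ by induction over the recurrences (\ref{grs-1}) and (\ref{grs-2})---is the same as the paper's. The problem is the concrete bound you propose. The estimate $g_{r,s}\leq C^{r+s}(r!)^{k/2-1}\Gamma(s/2+1)$ is too strong: it is false. The paper's appendix proves $g_{r,s}\leq A^{kr+s}\sqrt{(kr+s)!}$, and Remark~\ref{no-mom} observes this is sharp up to the base of the exponential. Indeed, since $Z_n\geq R_n^k$ (the root alone contributes $R_n^k$), one has $g_{r,0}={\mathbb E}(Z^r)\geq {\mathbb E}(R^{kr})=g_{0,kr}=2^{kr}\Gamma(kr/2+1)$, and Stirling gives $\Gamma(kr/2+1)\asymp 2^{-kr/2}\sqrt{(kr)!}$ (up to polynomial factors); this exceeds your bound $C^r(r!)^{k/2-1}$ by roughly $r^r k^{kr/2}$ (super-exponential). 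You can also see the induction break down directly: in (\ref{grs-1}) the term $\frac{\Gamma((kr+s)/2)}{\Gamma((kr+s+1)/2)}\,kr\,g_{r-1,k+s-1}$, evaluated with your hypothesis, has ratio to the target bound $C^{r+s}(r!)^{k/2-1}\Gamma(s/2+1)$ that behaves like $(kr+s)^{-1/2}\,\Gamma((s+k+1)/2)/\Gamma(s/2+1)\asymp s^{(k-2)/2}$ for fixed $r$ and large $s$, which diverges for every $k\geq 3$; the exponent $k/2-1$ simply does not propagate.

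There is a second, independent issue: you invoke the Hamburger form of Carleman's condition, $\sum_r g_{2r,0}^{-1/(2r)}=\infty$. With the actual growth $g_{r,0}\asymp A^{kr}\sqrt{(kr)!}$, Stirling gives $g_{2r,0}^{1/(2r)}\asymp r^{k/2}$, so the Hamburger series $\sum_r g_{2r,0}^{-1/(2r)}\asymp\sum_r r^{-k/2}$ already converges for $k\geq 3$, i.e., the criterion is inconclusive precisely in the regime of interest. The paper instead uses the Stieltjes form $\sum_r g_{r,0}^{-1/(2r)}$ (legitimate because $Z\geq 0$), for which the same growth rate yields $\sum_r r^{-k/4}$, divergent exactly for $k\leq 4$. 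Your arrival at the threshold $k\in\{3,4\}$ is thus a coincidence: the two errors (a too-small bound and the stronger Hamburger criterion) happen to cancel numerically. To repair the argument you should adopt the bound $A^{kr+s}\sqrt{(kr+s)!}$, prove it by induction as in the appendix (the combinatorial inequality $\binom{r}{\ell}^2\leq\binom{kr+s}{k\ell}$ is the key device for the convolution term), and then apply the Stieltjes Carleman condition to the $Z$-marginal.
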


\begin{proof} 
In the appendix, we prove that there is a an absolute constant $A$ such that 
\begin{equation}\label{upper-bound}
g_{r,s}\leq A^{kr+s}\sqrt{(kr+s)!},\qquad (r,s\geq 0).
\end{equation} 

Now, in order to establish the result, we only need to show that the two marginal sequences $\{g_{r,0}\}_{r=0}^{\infty}$ and $\{g_{0,s}\}_{s=0}^{\infty}$ uniquely characterize distributions. This is clear for the latter, as it is the (unique) moments sequence of a Rayleigh distribution. 

As for the former, we use Carleman's condition for the Stieltjes moment problem which means that we have to check the divergence of $\sum_{r=0}^{\infty}g_{r,0}^{-1/(2r)}$:
\[
\sum_{r=0}^{\infty}g_{r,0}^{-1/(2r)}\geq A^{-k/2}\sum_{r=0}^{\infty}(kr)!^{-1/(4r)}.
\]
The series on the right hand-hand side diverges if and only if $k=3$ or $k=4$.
\end{proof}
\begin{rem}\label{no-mom}
It is not hard to see that the estimate $g_{r,0}\leq A^{kr}\sqrt{(kr)!}$ is sharp up to the base of the exponential growth term and thus the series $\sum_{r=0}^{\infty}g_{r,0}^{-1/(2r)}$ is indeed convergent for all $k\geq 5$. As a consequence, it is not clear how to apply the method of moments for $k\geq 5$.
\end{rem}

\begin{proof}[Proof of Theorem~\ref{main-result-2}]
From Proposition~\ref{mixed-moments} and Lemma~\ref{unique}, we can conclude that for $k=3$ or $k=4$:
\[
\left(\frac{Z_n}{n^{k/2}},\frac{R_n}{n^{1/2}}\right)\stackrel{d}{\longrightarrow}(Z,R).
\]
From this the result follows.
\end{proof}
\begin{rem}\label{ll-k=2}
For $k=2$, a similar result holds but with a different normalization. More precisely, in this case, we have to shift the mean (see (\ref{exp-mean-2})) as in Section~\ref{nprt} and the result becomes:
\[
\frac{Z_n^{(2)}}{n}-2\log n-(4\log 2+2\gamma-2)\stackrel{d}{\longrightarrow} Z^{(2)},\qquad (n\rightarrow\infty),
\]
where $Z^{(2)}$ is a random variable which is again uniquely characterized by its moment sequence. This can either be proved with the method from this section, or more easily with martingale arguments; see \cite{ZhP} where these arguments are used for a closely related random model.
\end{rem}

\section{Conclusion}\label{con}

Due to its importance in chemistry, the first Zagreb index has been investigated for several random tree models and limit laws have been proved with martingale techniques and Stein's method; see, e.g., \cite{FeHu} and \cite{ZhP}. These techniques do not seem to work for the generalized Zagreb index. In this paper, we used another approach, namely, the {\it moment-transfer approach} to investigate the generalization.

We concentrated on random non-plane and plane recursive trees for which {\it transfer results}, on which the moment-transfer approach rests, have been established in previous work \cite{Hw,HwNe}. For random non-plane trees, we proved that the the generalized Zagreb index has linear mean and variance, and (when properly normalized) satisfies a central limit theorem regardless of the value of $k$ (the main parameter of the generalized Zagreb index; $k=2$ is the classical case). On the other hand, for random plane recursive trees, the mean grows with $k$ and we proved that the limit law is non-normal for $2\leq k\leq 4$. More precisely, we derived the asymptotic moment sequence for all $k\geq 3$ and showed that it characterizes the limit law for $k=3$ and $k=4$. For the mean, our result on the moments also includes the case $k=2$ for which we have a closed-form expression; it corrects a previous result from \cite{ZhP} which was stated for random plane recursive trees even though the author investigated a slightly different random model. In addition, we commented on the limit law of the case $k=2$ which requires a different normalization; see Remark~\ref{ll-k=2}. In this case, our method of proof works, too, but the result is more easily proved by using martingale techniques (that is why we did not provided details). 

\section*{Acknowledgments}

This research was initiated at the 2024 Institute for Mathematical Statistics – Asia-Pacific Rim Meeting (IMS–APRM 2024) which was held in Melbourne, Australia. A large part of the technical work was done while the second author was visiting the first author at the University of Science and Technology of China (USTC) in Hefei, China. He thanks the first author and USTC for hospitality. The second and third author were partially supported by the National Science and Technology Council (NSTC), Taiwan under grants  NSTC-113-2115-M-004-004-MY3 (MF) and NSCT-114-2115-M-030-006 (TCY).

\section*{Appendix}

In this appendix we prove (\ref{upper-bound}).

\begin{pro}\label{res-app}
For the sequence $\{g_{r,s}\}_{r,s=0}^{\infty}$ from Proposition~\ref{mixed-moments}, we have the bound
\[
g_{r,s}\leq A^{kr+s}\sqrt{(kr+s)!},
\]
where $A$ is a suitable large constant.
\end{pro}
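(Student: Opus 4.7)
We prove the bound by strong induction on the pair $(r,s)$ equipped with the lexicographic order, parallel to the proof of Proposition~\ref{mixed-moments}. The base cases $(0,s)$ for $s\geq 0$ are treated directly: the exact formula $g_{0,s}=s!\sqrt{\pi}/\Gamma((s+1)/2)$ from (\ref{asymp-Rnm}) together with Stirling gives $g_{0,s}/\sqrt{s!}=O(2^{s/2})$, so the claim holds provided $A\geq 2$ (after absorbing polynomial corrections into a larger $A$). All remaining cases with $r\geq 1$ will be handled by the inductive step applied to (\ref{grs-1}) or (\ref{grs-2}); in particular, $(1,0)$ follows inductively from the bound on $g_{0,k-1}$ combined with the lower-order term in (\ref{grs-2}).

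For the inductive step, fix $(r,s)$ with $r\geq 1$, set $n=kr+s$, and assume the bound for all $(r',s')<_{\mathrm{lex}}(r,s)$. Substituting the induction hypothesis into (\ref{grs-1}) (if $s>0$) or (\ref{grs-2}) (if $s=0$) and dividing through by $A^n\sqrt{n!}$ produces an inequality of the form
$$\frac{g_{r,s}}{A^n\sqrt{n!}}\leq c_1(r,s)+\frac{c_2(r,s)}{A},$$
where $c_1$ collects the convolution sum and $c_2$ the lower-order mixed-moment terms. The Stirling estimate $\Gamma(n/2)/\Gamma((n+1)/2)=\Theta(n^{-1/2})$ together with the trivial bound $kr+s\leq n$ shows that $c_2(r,s)\leq C_2$ for a universal constant $C_2$ independent of $r,s$.

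The heart of the argument is showing $c_1(r,s)\to 0$ as $n\to\infty$. Rewriting the factorial quotient as $\sqrt{(k\ell)!(n-k\ell)!/n!}=1/\sqrt{\binom{n}{k\ell}}$ and converting the Gamma product into a Beta function $B(a_\ell,b_\ell)$ with $a_\ell=(k\ell-1)/2$ and $b_\ell=(n-k\ell+1)/2$, one finds
$$c_1(r,s)\sim\frac{1}{\sqrt{2\pi n}}\sum_{\ell=1}^{r}\frac{\binom{r}{\ell}}{\sqrt{\binom{n}{k\ell}}}\,B(a_\ell,b_\ell).$$
Two ingredients drive the estimate. First, the Vandermonde-type inequality $\binom{r}{\ell}^{k}\leq\binom{kr}{k\ell}\leq\binom{n}{k\ell}$, obtained by isolating the diagonal term in $\binom{kr}{k\ell}=\sum_{i_1+\cdots+i_k=k\ell}\prod_j\binom{r}{i_j}$, gives $\binom{r}{\ell}/\sqrt{\binom{n}{k\ell}}\leq 1$ for $k\geq 2$. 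Second, the Stirling form $B(a,b)\sim\sqrt{2\pi/(a+b)}\,(t^{t}(1-t)^{1-t})^{a+b}/\sqrt{t(1-t)}$ with $t=a/(a+b)$ shows that interior values of $\ell$ (for which both $k\ell$ and $n-k\ell$ are of order $n$) contribute exponentially small amounts, while the boundary terms at $\ell=1$ and at $\ell$ near $r$ contribute only polynomially small amounts of order $O(n^{1-k})$ and $O(n^{-s-1})$, respectively. Summing these contributions yields $c_1(r,s)\to 0$.

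To close the induction, choose $N_0$ such that $c_1(r,s)\leq 1/2$ for all $(r,s)$ with $n\geq N_0$, take $A\geq 2C_2$, and further enlarge $A$ to individually dominate the finitely many $g_{r,s}$ with $n<N_0$. The main technical obstacle is the Laplace-type estimate on the Beta-function sum: one must argue uniformly across all admissible pairs $(r,s)$, with special care required in the regime $s=\Theta(n)$, where the boundary term at $\ell\approx r$ behaves quantitatively differently from that at $\ell=1$. The geometric decay of $(t^{t}(1-t)^{1-t})^{n/2}$ inside $B(a_\ell,b_\ell)$ is the crucial ingredient that secures this uniformity.
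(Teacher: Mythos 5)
Your induction has the same skeleton as the paper's proof: lexicographic induction on $(r,s)$, the Rayleigh moments plus Stirling for the base case $r=0$, the bound $\Gamma(x/2)/\Gamma((x+1)/2)=O(x^{-1/2})$ to make the lower-order terms contribute $O(1/A)$, and you even rediscover the paper's Lemma~\ref{tech-lmm-2} (the inequality $\binom{r}{\ell}^k\leq\binom{kr}{k\ell}$, with the same block-partition proof). The one place where you diverge is exactly where the content of the proof lies, namely the convolution sum $c_1(r,s)$, and there your argument has a genuine gap. You claim $c_1(r,s)\to 0$ via a Laplace-type analysis of $\sum_\ell \binom{r}{\ell}\binom{n}{k\ell}^{-1/2}B(a_\ell,b_\ell)$, splitting into exponentially small interior terms and polynomially small boundary terms; but this estimate is only asserted, not proved, and you yourself flag the required uniformity over all admissible $(r,s)$ (in particular the regime $s=\Theta(n)$, where the constants hidden in your $O(n^{-s-1})$ blow up with $s$) as ``the main technical obstacle.'' An induction whose inductive step rests on an unproven uniform asymptotic estimate is not closed. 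The individual asymptotics you quote do appear to be correct, so the route is likely salvageable, but carrying out the uniform Laplace estimate would be a substantial piece of work in its own right, and you would additionally have to manage the bookkeeping of the finitely many exceptional pairs with $kr+s<N_0$.

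The paper avoids all of this with one elementary inequality (its Lemma~\ref{tech-lmm-1}): for integers $a,b\geq 1$, $\Gamma(a/2)\Gamma(b/2)\leq\sqrt{\pi}\,\Gamma((a+b-1)/2)$, proved by a two-line induction on $a+b$. Applied with $a=k\ell-1$, $b=n-k\ell+1$, it bounds each Gamma quotient in the convolution sum by $1/(n-1)$, and combined with $\binom{r}{\ell}\binom{n}{k\ell}^{-1/2}\leq 1$ this gives $c_1(r,s)\leq r/(kr+s-1)\leq 1/3$ \emph{for every} $(r,s)$ --- no asymptotics, no exceptional set, no uniformity issue. Since $1/3+2/A\leq 1$ once $A\geq 3$, the induction closes immediately. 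I recommend you replace your Laplace-method step by this uniform non-asymptotic bound; note that you do not need $c_1\to 0$, only $c_1+c_2/A\leq 1$.
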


We first need two lemmas.
\begin{lmm}\label{tech-lmm-1}
For all integers $a,b\geq 1$, we have
\[
\frac{\Gamma(a/2)\Gamma(b/2)}{\sqrt{\pi}\Gamma((a+b-1)/2)}\leq 1.
\]
\end{lmm}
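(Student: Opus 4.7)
The plan is to reformulate the inequality as a comparison of two Beta function values whose arguments sum to the same number, and then exploit the symmetry and unimodality of $B(x,s-x)$ in $x$. Since $\sqrt{\pi}=\Gamma(1/2)$, multiplying numerator and denominator by $\Gamma((a+b)/2)$ rewrites the left-hand side as
\[
\frac{\Gamma(a/2)\Gamma(b/2)}{\sqrt{\pi}\,\Gamma((a+b-1)/2)}=\frac{B(a/2,b/2)}{B(1/2,(a+b-1)/2)},
\]
and both Beta functions have arguments summing to $s:=(a+b)/2$. Thus the claim reduces to showing $B(a/2,b/2)\leq B(1/2,(a+b-1)/2)$.

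Next, I would study the function $g(x):=B(x,s-x)=\Gamma(x)\Gamma(s-x)/\Gamma(s)$ on $(0,s)$. It is symmetric about $x=s/2$, and its logarithmic derivative $\psi(x)-\psi(s-x)$ is negative on $(0,s/2)$ and positive on $(s/2,s)$ because the digamma function $\psi$ is strictly increasing (equivalently, $\log\Gamma$ is strictly convex). Hence $g$ is unimodal with a global minimum at $x=s/2$, and the value $g(x)$ is a nondecreasing function of the distance $|x-s/2|$.

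Applying this to our two points $x=a/2$ and $x=1/2$, I would compute $|a/2-s/2|=|a-b|/4$ and $|1/2-s/2|=(a+b-2)/4$ (using $a+b\geq 2$). The desired inequality $g(a/2)\leq g(1/2)$ is therefore equivalent to $|a-b|\leq a+b-2$, i.e.\ $\min(a,b)\geq 1$, which is exactly the hypothesis.

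There is no real obstacle here; the only point requiring a brief justification is the unimodality of $g$, which follows immediately from log-convexity of $\Gamma$. If one prefers a self-contained, purely elementary route avoiding $\psi$, one can instead use the functional equation $\Gamma(z+1)=z\Gamma(z)$ to derive the recurrence
\[
f(a+2,b)=\tfrac{a}{a+b-1}\,f(a,b)\leq f(a,b)\qquad (a,b\geq 1),
\]
where $f(a,b)$ denotes the left-hand side of the lemma, and then reduce to checking $f(1,b)=1$ and $f(2,b)\leq 1$; the latter in turn follows from $f(2,1)=1$, $f(2,2)=2/\pi$, and the same recurrence applied in the second variable by symmetry.
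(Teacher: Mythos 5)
Your main argument is correct, and it takes a genuinely different route from the paper. The paper proves the bound by induction on $a+b$: writing $f(a,b)$ for the left-hand side, the functional equation $\Gamma(z+1)=z\Gamma(z)$ gives $f(a+2,b)=\tfrac{a}{a+b-1}f(a,b)\leq f(a,b)$, and one reduces to small base cases --- exactly the ``elementary fallback'' you sketch in your last paragraph (your version is in fact slightly more careful about the base cases $f(1,b)$ and $f(2,b)$ than the paper's, which starts only from $a+b\in\{2,3\}$ and thus leaves cases like $(a,b)=(2,2)$ to be checked separately). Your primary argument instead rewrites the ratio as $B(a/2,b/2)/B(1/2,(a+b-1)/2)$, observes that both Beta values have arguments summing to $s=(a+b)/2$, and uses the strict monotonicity of the digamma function to show that $x\mapsto B(x,s-x)$ is symmetric about $s/2$ and nondecreasing in $|x-s/2|$; the inequality then reduces to $|a-b|\leq a+b-2$, i.e.\ $\min(a,b)\geq 1$. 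All steps check out. What this buys you: the statement and proof extend verbatim to all real $a,b\geq 1$, and you see at a glance that equality holds precisely when $\min(a,b)=1$; what it costs is the (mild) reliance on log-convexity of $\Gamma$, whereas the paper's induction uses nothing beyond the functional equation.
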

\begin{proof}
We prove this by induction on $a+b=k$. Note that the claim is true for $k=2$ and $k=3$. Assume that it holds for $k'<k$. We will show it for $a+b=k$. By the functional equation for the Gamma function, we have
\[
\frac{(a/2-1)\Gamma((a-2)/2)\Gamma(b/2)}{\sqrt{\pi}((a+b-1)/2-1)\Gamma((a+b-3)/2)}=
\frac{a-2}{a-2+b-1}\cdot\frac{\Gamma((a-2)/2)\Gamma(b/2)}{\sqrt{\pi}\Gamma((a+b-3)/2)}\leq 1,
\]
where the first factor is trivially bounded by $1$ and the second factor is bounded by $1$ because of the induction hypothesis.
\end{proof}

\begin{lmm}\label{tech-lmm-2}
For $k\geq 2$, we have
\[
\binom{r}{\ell}^2\leq\binom{kr+s}{k\ell},\qquad (0\leq\ell\leq r),
\]
where $r,s\geq 0$ are integers.
\end{lmm}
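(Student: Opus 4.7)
The plan is to sandwich the desired inequality through $\binom{kr}{k\ell}$, using the Vandermonde convolution as the key algebraic input.

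First, I would peel off the $s$. Since $r\ge\ell$ implies $kr\ge k\ell$, each step $n\mapsto n+1$ satisfies $\binom{n+1}{k\ell}=\binom{n}{k\ell}+\binom{n}{k\ell-1}\ge\binom{n}{k\ell}$, so iterating $s$ times gives
\[
\binom{kr+s}{k\ell}\ \ge\ \binom{kr}{k\ell}.
\]
This reduces the problem to the case $s=0$, i.e., proving $\binom{r}{\ell}^{2}\le\binom{kr}{k\ell}$ for $k\ge2$.

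Next, I would apply the $k$-fold Vandermonde identity: splitting $[kr]$ into $k$ disjoint blocks of size $r$ and counting $k\ell$-subsets according to how many elements they pick from each block yields
\[
\binom{kr}{k\ell}\ =\sum_{\substack{\ell_{1},\ldots,\ell_{k}\ge0\\ \ell_{1}+\cdots+\ell_{k}=k\ell}}\binom{r}{\ell_{1}}\cdots\binom{r}{\ell_{k}}\ \ge\ \binom{r}{\ell}^{k},
\]
where the lower bound comes from retaining just the single term $\ell_{1}=\cdots=\ell_{k}=\ell$ (all other terms are nonnegative). Since $0\le\ell\le r$ we have $\binom{r}{\ell}\ge1$, so $\binom{r}{\ell}^{k}\ge\binom{r}{\ell}^{2}$ whenever $k\ge2$. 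Chaining the three inequalities gives
\[
\binom{r}{\ell}^{2}\ \le\ \binom{r}{\ell}^{k}\ \le\ \binom{kr}{k\ell}\ \le\ \binom{kr+s}{k\ell},
\]
as claimed.

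There is essentially no obstacle here: the only point requiring a sanity check is that the Vandermonde decomposition is genuinely valid for every $0\le\ell\le r$ (so that the term $\binom{r}{\ell}^{k}$ really appears, which needs $k\ell\le kr$, i.e.\ $\ell\le r$) and that the monotonicity step in $s$ does not demand $\ell\le r$ separately — both are guaranteed by the hypothesis $0\le\ell\le r$. The proof therefore collapses into a two-line chain once the decomposition $\binom{kr}{k\ell}\ge\binom{r}{\ell}^{k}$ is in place.
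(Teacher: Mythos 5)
Your proof is correct and follows essentially the same route as the paper: the block decomposition of $[kr]$ into $k$ parts of size $r$ (i.e.\ the Vandermonde convolution) to get $\binom{r}{\ell}^{k}\leq\binom{kr}{k\ell}$, followed by the chain $\binom{r}{\ell}^{2}\leq\binom{r}{\ell}^{k}\leq\binom{kr}{k\ell}\leq\binom{kr+s}{k\ell}$. You merely spell out the monotonicity in $s$ and the use of $\binom{r}{\ell}\geq 1$ in slightly more detail than the paper does.
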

\begin{proof}
Partition a set of $kr$ objects into $k$ blocks which all have size $r$. Then, clearly
\[
\binom{r}{\ell}^k\leq\binom{kr}{k\ell}
\]
as selecting $\ell$ elements from each block gives a selection of $k\ell$ elements from the original set. Thus,
\[
\binom{r}{\ell}^2\leq \binom{r}{\ell}^k\leq\binom{kr}{k\ell}\leq\binom{kr+s}{k\ell}
\]
as claimed.
\end{proof}

\begin{proof}[Proof of Proposition~\ref{res-app}] We show the claimed bound by using induction on $(r,s)$ which we equip with the lexicographic order. 

First, for $r=0$, we have
\[
g_{0,s}=\frac{s!\sqrt{\pi}}{\Gamma((s+1)/2)}
\]
as these are the moments of the Rayleigh distribution with parameter $\sqrt{2}$. Thus, the claim for the induction base follows by Stirling's formula for the Gamma function. 

Next, we assume that the claim holds for $(r',s')$ which are (lexicographically) smaller than $(r,s)$. We show that it holds for $(r,s)$ by using the recurrences (\ref{grs-1}) and (\ref{grs-2}) where we concentrate on the former as details for the later are similar (and even easier as one term of (\ref{grs-1}) is missing in (\ref{grs-2})).

We can break the right-hand side of (\ref{grs-1}) into three parts according to its three terms. First, for the first part:
\begin{align*}
\frac{1}{2\sqrt{\pi}\Gamma((kr+s+1)/2)}&\sum_{\ell=1}^{r}\binom{r}{\ell}g_{\ell,0}g_{r-\ell,s}\Gamma\left(\frac{k\ell-1}{2}\right)\Gamma\left(\frac{k(r-\ell)+s+1}{2}\right)\\
&\leq\frac{1}{kr+s-1}\sum_{\ell=1}^{r}\binom{r}{\ell}g_{\ell,0}g_{r-\ell,s}\\
&\leq\frac{A^{kr+s}}{kr+s-1}\sum_{\ell=1}^{r}\sqrt{\binom{r}{\ell}^2(k\ell)!(k(r-\ell)+s)!}\\
&\leq\frac{r}{kr+s-1}A^{kr+s}\sqrt{(kr+s)!}\leq\frac{1}{3}A^{kr+s}\sqrt{(kr+s)!},
\end{align*}
where we used Lemma~\ref{tech-lmm-1} for the first estimate, the induction hypothesis for the second estimate, and Lemma~\ref{tech-lmm-2} for the second last estimate. For the second and third part, recall that for $x\geq 1$,
\[
\frac{\Gamma(x/2)}{\Gamma((x+1)/2)}\leq 2x^{-1/2}.
\]
Thus,
\begin{align*}
&\frac{\Gamma((kr+s)/2)}{\Gamma((kr+s+1)/2)}\left(krg_{r-1,k+s-1}+sg_{r,s-1}\right)\\
&\qquad\leq 2(kr+s)^{-1/2}\left(krA^{kr+s-1}\sqrt{(kr+s-1)!}+sA^{kr+s-1}\sqrt{(kr+s-1)!}\right)\\
&\qquad\leq \frac{2}{A}A^{kr+s}\sqrt{(kr+s)!}\leq\frac{2}{3}A^{kr+s}\sqrt{(kr+s)!},
\end{align*}
where the last step follows since we can assume that $A\geq 3$. 

Combining the above two estimates for the first part and the second and third part completes the induction step and hence the proof.
\end{proof}
\end{document}